\documentclass[11pt,a4paper]{article}
\usepackage[margin=2.3cm]{geometry}
\usepackage{amsmath,amssymb,amsthm}
\usepackage{booktabs,graphicx}
\usepackage[colorlinks=true,linkcolor=blue,citecolor=blue,urlcolor=blue]{hyperref}
\usepackage{enumitem}
\theoremstyle{plain}
\newtheorem{theorem}{Theorem}
\newtheorem{proposition}[theorem]{Proposition}
\newtheorem{lemma}[theorem]{Lemma}
\newtheorem{corollary}[theorem]{Corollary}
\newtheorem{conjecture}[theorem]{Conjecture}
\DeclareMathOperator{\Aut}{Aut}
\theoremstyle{definition}
\newtheorem{definition}[theorem]{Definition}
\newtheorem{remark}[theorem]{Remark}
\newtheorem*{theoremA}{Theorem A}
\newtheorem{problem}{Problem}
\newcommand{\Z}{\mathbb{Z}}
\newcommand{\Cay}{\operatorname{Cay}}
\newcommand{\nind}{\eta}
\newcommand{\DE}{D_E}
\newcommand{\DN}{D_{\bar E}}

\title{\textbf{Induced Embeddings of Graphs into Abelian Cayley Graphs}}
\author{Rigobert Fokam Souop \and Laurent Bitjoka}
\date{August 2026}

\begin{document}
\maketitle

\noindent\textbf{Mathematics Subject Classifications:} 05C25, 05C60, 05C75,
11B13, 20K01, 68R10.

\noindent\textbf{Keywords:} induced subgraph; Cayley graph; abelian group;
sum-free set; Sidon set; representation number.

\begin{abstract}
For a finite graph $G$ on $n$ vertices, let $\eta(G)$ denote the least order
of a finite abelian group $\Gamma$ for which $G$ is an induced subgraph of
some Cayley graph of $\Gamma$. Babai and S\'os (1985) settled the worst-case
order of magnitude: it is $\Theta(n^2)$. We treat $\eta$ instead as an
invariant of the individual graph, minimised over all finite abelian groups
rather than over the cyclic groups alone, which is the restriction implicit in
the literature on representation numbers modulo $n$.

We prove a local order floor: $\eta(G)$ is at least the maximum of $n$ and
twice the largest independence number of a neighbourhood of $G$. This
localises at an arbitrary vertex the correspondence of Babai and S\'os between
induced stars and sum-free sets; a corollary of the classification of maximum
sum-free sets in abelian groups does not lower this floor, but restricts which
host orders are admissible and so prunes the search. We determine $\eta$ exactly for
paths, where it equals $n+1$, and for complete bipartite graphs $K_{a,b}$,
where it equals $2\max(a,b)$ and meets the floor. We prove a Cartesian product
bound which gives $\eta(P_m\,\square\,P_m)=(1+o(1))n$.

We then report certified exact values of $\eta$ for $22$ graphs, computed over
all abelian groups. Seventeen of the $22$ optimal hosts are cyclic, so on most
of these graphs the cyclic restriction costs nothing; where it bites, however,
it is expensive. A search restricted to cyclic groups returns $36$ for the
Petersen graph against the true value $16$, and $59$ for the Frucht graph
against $27$. The cost of the restriction is concentrated rather than diffuse,
and we identify the graphs on which it is paid. We also determine $\eta$
exactly for the double stars $D_{q,q}$ with $2\le q\le6$, obtaining $5q$ in each
case. Since $\eta(D_{6,6})=30$ exceeds $2n=28$, no constant below $15/7$ can
bound $\eta(T)/n$ over all trees.
\end{abstract}

\section{Introduction}

Every finite graph is an induced subgraph of some Cayley graph of every
sufficiently large group \cite{babai1978}, and Babai and S\'os
\cite{babai-sos1985} settled how large the group must be: order $O(n^2)$
suffices for every $n$-vertex graph, and a counting argument shows that most
$n$-vertex graphs admit no host of order $o(n^2)$. The worst-case order of
magnitude has therefore been known since 1985. What is not known is the value
for an \emph{individual} graph, and that value is the subject of this paper.

Our first message is a measurement. For the Petersen graph the least abelian
host has order $16$; a search restricted to cyclic hosts returns $36$. For the
Frucht graph the true value is $27$ against $59$ over $\Z_N$. The only
per-graph invariant previously studied in this area, the representation
number of Erd\H{o}s and Evans \cite{erdos-evans1989}, fixes the host group to
be $\Z_N$. On graphs of this size that restriction costs a factor of up to
$2.25$, so it is not a harmless normalisation.

Two neighbouring schemes bracket the one studied here. The companion papers
\cite{p1,p2} require the embedding to be \emph{isometric}, writing $\nu(G)$
for the corresponding minimal host order; \cite{comp} keeps the vertex set
fixed at order $n$ and perturbs the graph by edge edits until it becomes an
abelian Cayley graph. These two schemes are extreme points of one trade-off,
and both extremes are expensive. The universal binary construction of \cite[Cor.~1]{p1} embeds any
graph isometrically into a Cayley graph of $\Z_2^{\,n-1}$, of order $2^{n-1}$,
and within that family of hosts the exponent cannot be reduced
\cite[Thm.~5]{p2}. Two qualifications matter. First, $2^{n-1}$ is a statement
about \emph{binary} hosts, not about $\nu$: the graphs witnessing tightness
there are odd cycles, which are circulants and therefore satisfy $\nu=n$, so
they never need the construction in the first place. Second, and this is the
case that bites, the blow-up is driven by irregularity. A Cayley graph is
regular, so an irregular graph can never be a host on its own vertex set, and
the order must grow; stars are the extreme case, with $\nu(K_{1,q})=2q$ against
$n=q+1$. Since most graphs arising in applications are irregular, the isometric
route is the expensive one exactly where it would be used. Completion fixes the
host at order $n$ and pays instead in edits, which on irregular graphs are
correspondingly severe. This paper studies the intermediate notion.

\begin{definition}\label{def:eta}
An injection $f:V(G)\to\Gamma$ is an \emph{induced embedding} into
$\Cay(\Gamma,S)$ if for all $u\ne v$, $f(u)\sim f(v)$ in the host if and only
if $uv\in E(G)$. Write $\nind(G)$ for the least $|\Gamma|$ over finite abelian
$\Gamma$ admitting one.
\end{definition}

The structural result that governs $\nind$ is a lower bound local to a single
vertex. Write $\alpha(H)$ for the independence number of $H$ and $N(v)$ for
the open neighbourhood of $v$.

\begin{theoremA}[Local order floor; restated as Theorem~\ref{thm:floor}]
For every graph $G$ on $n$ vertices,
\[
  \nind(G)\;\ge\;\max\Bigl(n,\;2\max_{v\in V(G)}\alpha\bigl(G[N(v)]\bigr)\Bigr).
\]
\end{theoremA}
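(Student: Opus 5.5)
The bound $\nind(G)\ge n$ comes straight from the definition: an induced embedding is an injection $f:V(G)\to\Gamma$, so $|\Gamma|\ge n$. The substance is the second term. Fix a vertex $v$ and an independent set $I$ of $G[N(v)]$ with $|I|=\alpha(G[N(v)])$. I plan to exhibit two disjoint subsets of $\Gamma$, each of size $|I|$, forcing $|\Gamma|\ge 2|I|$. This localises the Babai--S\'os argument that an induced star forces a sum-free set in the host.

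\emph{Step 1: normalise.} Translations $x\mapsto x+g$ are automorphisms of $\Cay(\Gamma,S)$, so I may assume $f(v)=0$. Adjacency in the host is $x\sim y$ iff $y-x\in S$, and $S=-S$ because the host is undirected.

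\emph{Step 2: the neighbourhood lands in $S$.} Every $u\in N(v)$ satisfies $f(u)\sim 0$, so $f(u)\in S$. Let $A=f(I)\subseteq S$. Then $|A|=|I|$ by injectivity, and $0\notin A$ because $f(u)\neq f(v)=0$. For distinct $u,w\in I$ the vertices are non-adjacent in $G$, and the embedding is induced, so $f(u)-f(w)\notin S$.

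\emph{Step 3: the disjoint translate.} Fix $a\in A$ and set $B=a-A=\{a-b: b\in A\}$, which has $|B|=|A|$. I claim $B\cap A=\emptyset$:
\begin{itemize}[nosep]
  \item for $b\ne a$, Step 2 gives $a-b\notin S$, and $A\subseteq S$, so $a-b\notin A$;
  \item for $b=a$, the element is $0$, and $0\notin A$.
\end{itemize}
Hence $A$ and $B$ are disjoint subsets of $\Gamma$, so $|\Gamma|\ge 2|A|=2\alpha(G[N(v)])$.

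Maximising over $v$ and combining with the trivial bound gives the theorem. This argument never uses $0\notin S$; it relies only on the injectivity of $f$, so it is insensitive to loop conventions.

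I do not expect a real obstacle. The only delicate points are the two just flagged: adjacency must be read through $S$ consistently in both orientations, which is why $S=-S$ matters, and the case $b=a$ must be handled by injectivity rather than by any property of $S$.
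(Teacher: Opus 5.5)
Your proof is correct and is essentially the paper's argument: normalise $f(v)=0$, observe that $A=f(I)\subseteq S$ avoids $0$ by injectivity and contains no difference of two of its elements lying in $S$, then exhibit a disjoint copy of $A$. The only difference is cosmetic. The paper uses the translate $A+s$ for a fixed $s\in A$, while you use the reflection $a-A$; both rest on the sum-freeness of $A$ and give $|\Gamma|\ge 2|A|$.
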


\noindent
The second term localises the star/sum-free correspondence of
\cite[Prop.~7.4]{babai-sos1985} at an arbitrary vertex: an independent set of
size $q$ inside a neighbourhood forces an induced $q$-star, hence a sum-free
set of size $q$ in the host, hence order at least $2q$ by the Diananda--Yap
bound. Corollary~\ref{cor:floor-sharp} does not lower this floor; what the
classification of maximum sum-free sets supplies is the set of admissible host
orders, which is not upward closed, and hence a search economy. The floor replaces the floor
$\max(n,2\operatorname{diam})$ of \cite{p2}, whose diameter term provably does
not transfer to the induced setting.

The companion invariant is the isometric one. Call $f:V(G)\to\Gamma$ an
\emph{isometric embedding} into $\Cay(\Gamma,S)$ if
$d_{\Cay(\Gamma,S)}(f(u),f(v))=d_G(u,v)$ for all $u,v$, and write $\nu(G)$ for
the least $|\Gamma|$ over finite abelian $\Gamma$ admitting one; this is the
invariant of \cite{p1,p2}. Since an isometric embedding preserves distance $1$
and distance $\ge2$ separately, it is in particular induced, so
$\nind(G)\le\nu(G)$ always. The relaxation is exactly the metric: distances are
free, adjacency is not.

\paragraph{What is already known, and what is new.} Induced embedding into
Cayley graphs is not a new notion, and the existence question is settled. Babai
\cite{babai1978} proved that every finite graph is an induced subgraph of some
Cayley graph of every sufficiently large group; Babai and S\'os
\cite{babai-sos1985} reduced the required order to $O(n^3)$ for arbitrary
groups by a probabilistic argument on Sidon sets, and to $O(n^2)$ for cyclic
and for elementary abelian groups of square order, the latter via Singer
difference sets. Godsil and Imrich \cite{godsil-imrich1987} improved the
general constant to $(2+\sqrt3)n^3$. In the other direction, a counting
argument of \cite[Prop.~2.3]{babai-sos1985} shows that most $n$-vertex graphs
require order at least $(\tfrac12+o(1))n^2$ in any fixed group, and
\cite[Thm.~6.6]{babai-sos1985} extends a bound of this shape to all
vertex-transitive hosts. The order of magnitude of the worst case is therefore
$\Theta(n^2)$, and has been since 1985.

None of this determines $\nind(G)$ for an \emph{individual} graph, which is
the object of this paper. The closest existing per-graph invariant is the
representation number $\mathrm{rep}(G)$ of Erd\H{o}s and Evans
\cite{erdos-evans1989}, the least $N$ for which $G$ embeds induced in
$\Cay(\Z_N,\Z_N^\times)$; it fixes both the group ($\Z_N$) and the connection
set (the units).

\textbf{What is new here is therefore not the existence of induced Cayley
embeddings but the exact minimal order, taken over all finite abelian groups
and all symmetric connection sets.} We compute it exactly for $22$ graphs,
prove a local order floor that governs it, determine it in closed form for
three infinite families, and locate the graphs on which the cyclic restriction
implicit in $\mathrm{rep}$ is costly --- a minority of those we resolved, but
one on which the cost reaches a factor $2.25$. We also compute $\nind$ exactly
for the double stars $D_{q,q}$ up to $q=6$, where it equals $5q$
(Section~\ref{sec:doublestar}); this is the family on which our floor is
loosest, and it rules out the constant $2$ in the tree bound of
Conjecture~\ref{conj:trees}.

\paragraph{What the relaxation preserves.} Because the embedding is induced,
$A_G$ is precisely a principal submatrix of the host adjacency matrix. What is
sacrificed is the metric, not the adjacency structure or the group action on
the host. The relation to the isometric and scale-embedding literature is
discussed in Section~\ref{sec:related}. A motivation from outside
combinatorics is that a Cayley graph of an abelian group carries an exact
translation operator and a fast transform, which a general graph does not
\cite{ortega2018}; we do not pursue that direction here.

\paragraph{Contributions, and what is not one.} We state the division
explicitly, because part of the material below is classical and is included
only to make the paper self-contained.

\emph{Not new.} The difference-set characterisation of
Section~\ref{sec:char} is \cite[Prop.~3.1]{babai-sos1985} in per-labelling
form. The Sidon construction of Section~\ref{sec:sidon}
(Theorem~\ref{thm:sidon} and its corollary) is \cite[Prop.~3.1 and
Sec.~5]{babai-sos1985}; we reproduce it because it is the upper bound against
which everything else is measured. The $\Theta(n^2)$ order of magnitude
(Section~\ref{sec:magnitude}) combines that construction with the counting
bound of \cite[Prop.~2.3]{babai-sos1985}; our only addition is to record it in
the present notation and to check that the argument survives minimising over
all abelian groups. The case $a=1$ of Theorem~\ref{thm:bipartite},
$\nind(K_{1,q})=2q$, follows from \cite[Prop.~7.4 and 7.10]{babai-sos1985}
together with the Diananda--Yap bound \cite{diananda-yap1969}.

\emph{New.} The local order floor of Section~\ref{sec:floor}
(Theorem~\ref{thm:floor}); the exact values
$\nind(P_n)=n+1$ and $\nind(K_{a,b})=2\max(a,b)$ for $b\ge a\ge2$
(Section~\ref{sec:families}); the exact values $\nind(D_{q,q})=5q$ for
$2\le q\le6$ and the consequent refutation of the constant $2$ for trees
(Section~\ref{sec:doublestar}); the
reduction of the search to the quotient machinery of \cite{p1} with a weakened
certificate (Section~\ref{sec:algebraic}) and the resulting host-free
certification cost (Section~\ref{sec:complexity}); and the exact values of
Section~\ref{sec:examples}, computed over all abelian groups, with the
quantified failure of the cyclic restriction.

\section{Related work}\label{sec:related}

\subsection{Induced subgraphs of Cayley graphs}\label{sec:related-induced}

This is the literature our object belongs to. Babai \cite{babai1978} proved
that every finite graph occurs as an induced subgraph of some Cayley graph of
every sufficiently large group; see also \cite{babai1978b} for the companion
chromatic results and \cite{spencer1983} for what fails when the connection
set is required to be irredundant. Babai and S\'os \cite{babai-sos1985} is the
central reference. They introduce Sidon sets of the first and second kind in
groups (the two notions coincide for abelian groups), prove that a Sidon set
of size $n$ in $\Gamma$ makes \emph{every} $n$-vertex graph an induced
subgraph of some Cayley graph of $\Gamma$ \cite[Prop.~3.1]{babai-sos1985},
obtain $|\Gamma|\ge cn^3$ as a sufficient condition for arbitrary groups by a
probabilistic argument, and $|\Gamma|\ge cn^2$ for cyclic groups and for
elementary abelian groups of square order using the Erd\H{o}s--Tur\'an and
Singer constructions \cite{erdos-turan1941,singer1938}. They also prove the
matching counting lower bound \cite[Prop.~2.3]{babai-sos1985}, extend it to
vertex-transitive hosts \cite[Thm.~6.6]{babai-sos1985}, give a greedy
embedding of trees into any group of order $>n^2$
\cite[Thm.~7.1]{babai-sos1985}, and identify induced stars with product-free
(sum-free) sets \cite[Prop.~7.4]{babai-sos1985}. Godsil and Imrich
\cite{godsil-imrich1987} sharpened the general cubic bound to
$(2+\sqrt3)n^3$. Induced subgraphs of abelian Cayley graphs have more recently
been studied from the sensitivity side
\cite{alon-zheng2020,potechin-tsang2020,lehner-verret2020}, a different
question about the same objects.

Our paper differs from this line in a single respect, but it is the respect
that generates all of our results: Babai and S\'os ask how large a group
\emph{suffices for all $n$-vertex graphs at once}, whereas we ask for the
minimum order for a \emph{given} graph. The two questions have different
answers on structured graphs, and the whole of
Sections~\ref{sec:floor}--\ref{sec:examples} lives in that gap.

\subsection{Representation numbers modulo \texorpdfstring{$n$}{n}}

The one existing per-graph invariant of this type is the representation
number. Erd\H{o}s and Evans \cite{erdos-evans1989} proved that every finite
graph admits a labelling by distinct residues modulo some $N$ under which two
vertices are adjacent exactly when their labels differ by a unit, and defined
$\mathrm{rep}(G)$ as the least such $N$; see
\cite{evans1994,evans-isaak-narayan2000} for the systematic development and
\cite{akhtar-evans-pritikin2010,akhtar-evans-pritikin2012,akhtar2012} for
stars, complete multipartite graphs and sparse graphs. A representation modulo
$N$ is exactly an induced embedding into $\Cay(\Z_N,\Z_N^\times)$, so
$\nind(G)\le\mathrm{rep}(G)$ always. The invariant $\mathrm{rep}$ fixes two
things that $\nind$ leaves free: the group must be cyclic, and the connection
set must be the units. Section~\ref{sec:examples} measures the cost of the
first restriction alone and finds it substantial. The related product
dimension of Ne\v{s}et\v{r}il and Pultr
\cite{nesetril-pultr1977,lovasz-nesetril-pultr1980} is the other classical
companion of this circle of ideas.

\subsection{Isometric embedding}

The remaining background is the metric theory that the companion papers
\cite{p1,p2} build on. Isometric embedding into structured hosts began with
Firsov \cite{firsov1965} and the characterisation of partial cubes by
Djokovi\'c \cite{djokovic1973} and Winkler \cite{winkler1984}; Ovchinnikov
\cite{ovchinnikov2008} surveys the area. Extensions to Hamming graphs and
products are due to Wilkeit \cite{wilkeit1990} and Graham and Winkler
\cite{graham-winkler1985}, with \cite{imrich-klavzar2000,hammack2011} as
monograph references. Scale and $\ell_1$ embeddings are treated by Shpectorov
\cite{shpectorov1993} and by Deza and Laurent \cite{deza-laurent1997}.

Our object differs from all of these in that adjacency, not distance, is the
invariant preserved. The additive-combinatorial input is the same in both
settings: the extremal input to our bounds is the theory of sum-free sets in
abelian groups
\cite{rhemtulla-street1970,green-ruzsa2005,diananda-yap1969,yap1971}, refined
enumeratively in \cite{alon2014}, and the classical construction of
Section~\ref{sec:sidon} uses Sidon sets and Singer difference sets
\cite{singer1938,erdos-turan1941,bose-chowla1962,komlos1975}.
Algebraic background on Cayley graphs is standard \cite{godsil-royle2001};
that vertex-transitivity is strictly weaker than being a Cayley graph is due
to McKay and Praeger \cite{mckay-praeger1994}. The Smith normal form
underlying the quotient engine is surveyed by Stanley \cite{stanley2016}, with
lattice algorithms in \cite{schrijver1986}. Graph census data are from Read
and Wilson \cite{read-wilson1998}.

\section{The 2-colouring characterisation}\label{sec:char}

\begin{definition}
For injective $f:V(G)\to\Gamma$ set
$\DE=\{\pm(f(v)-f(u)):uv\in E(G)\}$ and
$\DN=\{\pm(f(v)-f(u)):uv\notin E(G),\,u\ne v\}$.
\end{definition}

\begin{proposition}[Exact characterisation; {\cite[Prop.~3.1]{babai-sos1985}}]
\label{prop:2col}
$f$ is an induced embedding for some connection set $S$ if and only if
$\DE\cap\DN=\emptyset$; and then one may take $S=\DE$.
\end{proposition}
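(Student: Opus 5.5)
The plan is to prove the two directions separately, working directly from Definition~\ref{def:eta} and the definitions of $\DE$ and $\DN$. Throughout, recall that a Cayley graph $\Cay(\Gamma,S)$ of an abelian group has $x\sim y$ if and only if $y-x\in S$, where $S$ is symmetric ($S=-S$) and $0\notin S$.

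\emph{Necessity.} Suppose $f$ is an induced embedding into $\Cay(\Gamma,S)$, and suppose some $d\in\DE\cap\DN$. Then $d=\pm(f(v)-f(u))$ for an edge $uv$, and $d=\pm(f(v')-f(u'))$ for a non-edge $u'v'$ with $u'\ne v'$. Because $uv$ is an edge, induced-ness gives $f(v)-f(u)\in S$, and symmetry of $S$ puts both signs in $S$; hence $d\in S$. Applying symmetry again, $f(v')-f(u')\in S$, so $f(u')\sim f(v')$ in the host. Induced-ness then forces $u'v'\in E(G)$, which contradicts the choice of $u'v'$. Therefore $\DE\cap\DN=\emptyset$.

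\emph{Sufficiency.} Suppose $\DE\cap\DN=\emptyset$ and set $S=\DE$. I would check three things.
\begin{enumerate}[label=(\roman*)]
\item $S$ is symmetric, since $\DE$ is closed under negation by construction.
\item $0\notin S$, since $f$ is injective, so $f(v)-f(u)\ne0$ whenever $u\ne v$. Hence $\Cay(\Gamma,S)$ is a genuine loopless undirected graph.
\item $f$ is induced. For distinct $u,v$: if $uv\in E(G)$ then $f(v)-f(u)\in\DE=S$, so the images are adjacent. If $uv\notin E(G)$ then $f(v)-f(u)\in\DN$, which is disjoint from $\DE=S$, so the images are not adjacent.
\end{enumerate}
This shows both the equivalence and the claim that one may take $S=\DE$.

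The only point needing care is the sign bookkeeping in the necessity direction, namely passing from $d\in S$ to $f(v')-f(u')\in S$. Symmetry of $S$ handles it. In the sufficiency direction, the one thing to remember is to verify $0\notin S$ using injectivity. There is no substantive obstacle; the proposition is a direct unpacking of the definitions.
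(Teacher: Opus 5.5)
Your proof is correct and follows essentially the same route as the paper. For sufficiency you take $S=\DE$ and check symmetry, $0\notin S$ via injectivity, and disjointness from $\DN$ for non-edges. For necessity the paper argues directly that an induced $f$ has $\DE\subseteq S$ and $\DN\cap S=\emptyset$, whereas you phrase the same containments as a contradiction.
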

\begin{proof}
($\Leftarrow$) With $S=\DE$ every edge difference lies in $S$ and every
non-edge difference lies in $\DN$, which is disjoint from $S$. Note
$0\notin\DE$ since $f$ is injective, and $\DE=-\DE$ by construction, so $S$ is
a legitimate connection set. ($\Rightarrow$) An induced $f$ with set $S$ has
$\DE\subseteq S$ and $\DN\cap S=\emptyset$, so the two are disjoint.
\end{proof}

Babai and S\'os state this for a Sidon set $S\subseteq\Gamma$, as the
equivalence between $S$ being Sidon and \emph{every} graph on vertex set $S$
being induced in some Cayley graph of $\Gamma$. Proposition~\ref{prop:2col} is
the same computation carried out for one labelling of one graph; we record it
in this form because it is the condition our search verifies.

\begin{lemma}[Normalisation; cf.\ {\cite[Sec.~2]{babai-sos1985}}]\label{lem:gen}
Let $G$ be connected and $f$ an induced embedding into $\Cay(\Gamma,S)$ with
$S=\DE$. Then, after translating so that $f(v_0)=0$ for some $v_0$, all images
lie in $\langle S\rangle$, and $f$ is an induced embedding into
$\Cay(\langle S\rangle,S)$. Consequently the minimum in
Definition~\ref{def:eta} is attained by a group generated by its connection
set.
\end{lemma}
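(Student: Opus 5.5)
The plan is to use connectivity of $G$ to write every image as a signed sum of edge differences. First translate. Replacing $f$ by $f-f(v_0)$ leaves every difference $f(v)-f(u)$ unchanged, so $\DE$ and $\DN$ are unchanged. By Proposition~\ref{prop:2col} the translated map is still an induced embedding into $\Cay(\Gamma,S)$ with $S=\DE$, and now $f(v_0)=0$.

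Next, show that all images lie in $H:=\langle S\rangle$. For any vertex $v$, connectivity gives a path $v_0=w_0,w_1,\dots,w_k=v$ in $G$. Then
\[
f(v)=\sum_{i=1}^{k}\bigl(f(w_i)-f(w_{i-1})\bigr),
\]
and each summand lies in $\DE=S$ because $w_{i-1}w_i\in E(G)$. Hence $f(v)\in H$. This step is where connectivity is used. For a disconnected $G$ the images of different components could sit in different cosets, and the normalisation would need the extra generators that the statement avoids.

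Then check that $f$, viewed as a map into $H$, is an induced embedding into $\Cay(H,S)$. Three facts are needed, and each is immediate:
\begin{itemize}
\item $S\subseteq H$, $S=-S$ and $0\notin S$, so $S$ is a legitimate connection set in $H$.
\item $f$ is still injective.
\item For $u\neq v$, adjacency of $f(u)$ and $f(v)$ in $\Cay(H,S)$ means $f(v)-f(u)\in S$, which is exactly adjacency in $\Cay(\Gamma,S)$. The induced condition therefore transfers verbatim. Equivalently, $\DE\cap\DN=\emptyset$ does not depend on the ambient group, so Proposition~\ref{prop:2col} applies in $H$.
\end{itemize}

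Finally, the consequence. Take any optimal abelian $\Gamma$ with an induced embedding $f$. By Proposition~\ref{prop:2col} we may assume $S=\DE$. The argument above then gives an induced embedding into the abelian subgroup $H\le\Gamma$, and $|H|\le|\Gamma|=\nind(G)$. Minimality forces $|H|=\nind(G)$, so $H=\Gamma$, and the optimum is attained by a group generated by its connection set.

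The only point needing care is the scope of this corollary. The statement assumes $G$ connected, so I would state the consequence for connected $G$, or else remark that it is meant for connected graphs. I do not expect a genuine obstacle anywhere; the path-sum step is the heart of the argument.
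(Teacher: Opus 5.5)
Your proof is correct and follows the paper's argument. Connectivity writes each image as a path-sum of edge differences lying in $S=\DE$, and restricting the host to $\langle S\rangle$ removes no image vertex and changes no adjacency. Your explicit minimality step for the consequence, and your caveat that it needs $G$ connected (e.g.\ $\overline{K_2}$, whose only host of order $2$ is $\Z_2$ with $S=\emptyset$), are correct refinements of points the paper leaves implicit.
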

\begin{proof}
Every edge difference lies in $S$; since $G$ is connected, every image differs
from $f(v_0)=0$ by a sum of elements of $S$. Restricting the host to
$\langle S\rangle$ removes no vertex of the image and changes no adjacency.
\end{proof}

Lemma~\ref{lem:gen} is what makes $\nind$ well posed: without it one could
inflate a host by an arbitrary direct factor on which $S$ acts trivially, and
the minimum would still be attained, but the connection set would carry no
information about $\Gamma$.

The condition is a $2$-colouring of difference classes: each class serves
edges or non-edges, never both. Differences may collide freely \emph{within}
$\DE$ or \emph{within} $\DN$; only cross-collisions are forbidden. This is
strictly weaker than requiring all differences distinct, a point we return to
in Remark~\ref{rem:notsidon}.

\begin{proposition}[Equality at $n$]\label{prop:floor-n}
$\nind(G)=n$ if and only if $G$ is an abelian Cayley graph.
\end{proposition}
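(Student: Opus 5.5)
The proof splits into the trivial floor $\nind(G)\ge n$ and the two directions of the equivalence.

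\emph{The floor.} Since $f$ is an injection $V(G)\to\Gamma$, every admissible host has $|\Gamma|\ge n$. Hence $\nind(G)\ge n$, and equality means precisely that some host of order exactly $n$ exists.

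\emph{Sufficiency.} Suppose $G\cong\Cay(\Gamma,S)$ for an abelian group $\Gamma$ with $|\Gamma|=n$. Take $f$ to be this isomorphism, viewed as a map into $\Cay(\Gamma,S)$ itself. It is a bijection, and it preserves adjacency and non-adjacency by definition, so it is an induced embedding. Thus $\nind(G)\le n$, and together with the floor, $\nind(G)=n$.

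\emph{Necessity.} Suppose $\nind(G)=n$. Then there is an abelian $\Gamma$ with $|\Gamma|=n$ and an induced embedding $f:V(G)\to\Gamma$ into some $\Cay(\Gamma,S)$. Because $f$ is an injection between sets of the same finite size, it is a bijection. For distinct $u,v$, the definition of induced embedding says $uv\in E(G)$ if and only if $f(u)\sim f(v)$ in the host. So $f$ is a graph isomorphism from $G$ onto $\Cay(\Gamma,S)$, and $G$ is an abelian Cayley graph.

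I do not need Lemma~\ref{lem:gen}, and that matters, since the lemma assumes $G$ connected. The direct bijection argument covers disconnected $G$ as well; then $\langle S\rangle$ is simply a proper subgroup and nothing changes. If one insists that $S=\DE$, Proposition~\ref{prop:2col} already guarantees this choice is legitimate.

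The only point needing care is convention: "abelian Cayley graph" must allow any symmetric $S$ with $0\notin S$, including $S=\emptyset$ and non-generating $S$. Under that reading there is no real obstacle; the statement is the observation that an injection into a set of equal size is a bijection.
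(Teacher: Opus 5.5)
Your proof is correct and is the paper's argument: an injection into a group of order $n$ is a bijection, so inducedness makes it an isomorphism onto $\Cay(\Gamma,S)$, and conversely the isomorphism itself serves as the embedding. Your remarks on the floor $\nind(G)\ge n$ and on allowing non-generating $S$ for disconnected $G$ make explicit what the paper's one-line proof leaves implicit.
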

\begin{proof}
If $N=n$ then $f$ is a bijection, so ``induced'' forces $G=\Cay(\Gamma,S)$;
the converse is the identity embedding.
\end{proof}

This is the induced analogue of \cite[Thm.~3]{p2}, with a shorter proof: no
metric argument is required.

\section{The order floor}\label{sec:floor}

Paper \cite{p2} proves $\nu(G)\ge\max(n,2\operatorname{diam}(G))$, the second
term because a connected vertex-transitive host on $N\ge3$ vertices has
diameter at most $\lfloor N/2\rfloor$ and isometry forces the host to realise
$\operatorname{diam}(G)$. \emph{That argument does not transfer.} An induced
embedding realises no distance beyond one, and indeed $\nind(P_n)=n+1$ lies
far below $2\operatorname{diam}=2(n-1)$. The replacement comes from the other
extremal argument of \cite{p2}, the sum-free bound used there for stars.

\begin{theorem}[Induced order floor]\label{thm:floor}
For every graph $G$ on $n$ vertices,
\[
  \nind(G)\;\ge\;\max\Bigl(n,\;2\max_{v\in V}\alpha\bigl(G[N(v)]\bigr)\Bigr),
\]
$\alpha$ denoting the independence number.
\end{theorem}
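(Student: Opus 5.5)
The bound splits into two terms, and I would prove them separately. The term $n$ is immediate: an induced embedding is an injection $f:V(G)\to\Gamma$, so $|\Gamma|\ge n$.

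For the second term, fix a vertex $v$ and an independent set $I\subseteq N(v)$ with $|I|=q=\alpha(G[N(v)])$. Take an induced embedding $f$ into $\Cay(\Gamma,S)$, and translate so that $f(v)=0$; translation by a group element is an automorphism of the Cayley graph, so this is harmless. Put $A=\{f(u):u\in I\}$, a set of $q$ distinct elements.

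Since each $u\in I$ is adjacent to $v$, we have $f(u)-0\in S$, so $A\subseteq S$. Since $I$ is independent and the embedding is induced, $f(u)-f(w)\notin S$ for all distinct $u,w\in I$.

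The natural next step is to show $A$ is sum-free or close to it. This is the step that needs care. We know $a-b\notin S$ for distinct $a,b\in A$, and $a\in S$, but the Diananda--Yap bound concerns sets $A$ with $(A+A)\cap A=\emptyset$, not difference conditions. I would handle this through the symmetric set $B=A\cup(-A)\subseteq S$ and use differences directly.

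For distinct $a,b\in A$, the element $a-b$ is not in $S$, so $a-b\notin A$. This gives $a\ne b+c$ for $c\in A$ with $c\ne$ the relevant index, so $A$ is "sum-free up to repeated summands." The Babai--S\'os localisation, an induced star $K_{1,q}$ at $v$, is exactly the configuration of their Proposition~7.4.

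Rather than reprove it, I would invoke that $\nind(K_{1,q})\ge 2q$. This holds because $G[\{v\}\cup I]\cong K_{1,q}$ is an induced subgraph of $G$, so any induced embedding of $G$ restricts to one of $K_{1,q}$, giving $\nind(G)\ge\nind(K_{1,q})$. The paper records $\nind(K_{1,q})=2q$ via Babai--S\'os Props.~7.4 and 7.10 together with Diananda--Yap.

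To make the argument self-contained, I would show directly that $|\Gamma|\ge 2q$ from the star condition. Let $A$ be the leaf images with $f(v)=0$, so $A\subseteq S$ and $(A-A)\cap S\subseteq\{0\}$. Pick any $a_0\in A$. The translate $a_0-A$ consists of $q$ elements, and for $a\ne a_0$ its element $a_0-a$ is a nonzero difference of leaves, hence not in $S$, while $a_0-a_0=0\notin S$. So $a_0-A$ is disjoint from $S\supseteq A$. Hence $A$ and $a_0-A$ are disjoint $q$-subsets of $\Gamma$, giving $|\Gamma|\ge 2q$.

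This elementary counting avoids Diananda--Yap entirely. It is the step I expected to be the obstacle, and it dissolves this way.

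Taking the maximum over $v$ and combining with the term $n$ gives the theorem.
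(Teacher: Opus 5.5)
Your proof is correct and is essentially the paper's argument: after normalising $f(v)=0$, the paper shows that $f(I)$ is disjoint from the translate $f(I)+s$ for some $s\in f(I)$, while you use the reflected copy $a_0-f(I)$. Both disjointness claims follow from the fact that $f(I)$ is genuinely sum-free, which you slightly undersell as ``up to repeated summands''; $a+a=b$ is also excluded, because $0=f(v)\notin f(I)$. Your detour through $\nind(K_{1,q})$, by restricting to the induced star, is valid but unnecessary once you have the direct count.
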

\begin{proof}
Injectivity gives $\nind\ge n$. Fix $v$ and an independent set
$I\subseteq N(v)$, and normalise $f(v)=0$ by translation. Every $u\in I$ is
adjacent to $v$, so $I'=f(I)\subseteq S$; and $0\notin I'$, because $f$ is
injective and $v\notin I$. Fix $s\in I'$, so $s\ne0$, and suppose
$z\in I'\cap(I'+s)$. Then $z=y+s$ with $y\in I'$, and $z\ne y$ since $s\ne0$,
so $z-y=s\in S$, making the two distinct vertices of $I$ with images $z$ and
$y$ adjacent in the host --- contradicting independence of $I$ and
inducedness. So $I'$ and $I'+s$ are disjoint subsets of $\Gamma$ of equal
size, whence $2|I|=2|I'|\le|\Gamma|$. Maximise over $I$ and $v$.
\end{proof}

\begin{remark}\label{rem:bs-star}
The argument shows more than the stated inequality: $I'$ is a sum-free subset
of $\Gamma$. With $G=K_{1,q}$ and $v$ the centre it reduces to the classical
correspondence of \cite[Prop.~7.4]{babai-sos1985}, that some Cayley graph of
$\Gamma$ contains an induced $q$-star exactly when $\Gamma$ has a sum-free set
of size $q$; the same step appears in \cite[Thm.~4]{p2}.
Theorem~\ref{thm:floor} is that correspondence localised at an arbitrary
vertex, applied to an independent set inside a neighbourhood rather than to a
whole neighbourhood, and this localisation is what makes it a bound for
general $G$.
\end{remark}

Because $I'$ is sum-free, the floor can be stated exactly rather than through
the density bound $2|I'|\le|\Gamma|$. Write $\mu(\Gamma)$ for the largest
sum-free subset of $\Gamma$; Diananda and Yap \cite{diananda-yap1969}
determined $\mu$ for all finite abelian $\Gamma$, and Green and Ruzsa
\cite{green-ruzsa2005} completed the picture.

\begin{corollary}[Admissible orders]\label{cor:floor-sharp}
Let $\alpha^*=\max_v\alpha(G[N(v)])$ and let
$N^*(a)=\min\{N:\mu(\Gamma)\ge a$ for some abelian $\Gamma$ with
$|\Gamma|=N\}$. Then $\nind(G)\ge\max(n,N^*(\alpha^*))$. Moreover
$N^*(a)=2a$, attained by the non-identity coset of an index-$2$ subgroup
\cite[Prop.~7.10]{babai-sos1985}, but the set of admissible orders is not
upward closed: if $N$ is odd then $\mu(\Gamma)\le\tfrac25N$ for every abelian
$\Gamma$ of order $N$ \cite{diananda-yap1969}, so an odd host requires
$N\ge\lceil5a/2\rceil$.
\end{corollary}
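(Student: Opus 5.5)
The plan is to prove the three assertions of Corollary~\ref{cor:floor-sharp} in order: the refined floor, the value $N^*(a)=2a$, and the failure of upward closure.

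\emph{The floor.} The proof of Theorem~\ref{thm:floor}, as noted in Remark~\ref{rem:bs-star}, produces for any vertex $v$ and independent $I\subseteq N(v)$ a sum-free set $I'\subseteq\Gamma$ with $|I'|=|I|$. Taking $v$ and $I$ with $|I|=\alpha^*$ gives $\mu(\Gamma)\ge\alpha^*$ for every abelian host $\Gamma$. Hence $|\Gamma|\ge N^*(\alpha^*)$ by the definition of $N^*$ as a minimum. Injectivity gives $|\Gamma|\ge n$ as before, and minimising over hosts yields $\nind(G)\ge\max(n,N^*(\alpha^*))$.

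\emph{The value $N^*(a)=2a$.} For the lower bound, the disjointness argument in Theorem~\ref{thm:floor} (a sum-free $A$ and $A+s$ are disjoint for $s\in A$) gives $2\mu(\Gamma)\le|\Gamma|$. So $\mu(\Gamma)\ge a$ forces $|\Gamma|\ge 2a$. For the upper bound, take $\Gamma=\Z_{2a}$ and $H=2\Z_{2a}$ of index $2$, and let $A=\Gamma\setminus H$ be the odd residues. Then $|A|=a$, and $A+A\subseteq H$ is disjoint from $A$, so $A$ is sum-free and $\mu(\Z_{2a})\ge a$. This is \cite[Prop.~7.10]{babai-sos1985}, and it gives $N^*(a)\le 2a$.

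\emph{Non-upward-closure.} I would cite Diananda--Yap \cite{diananda-yap1969} for $\mu(\Gamma)\le\frac25|\Gamma|$ when $|\Gamma|$ is odd. Their exact formula is $\mu=(\frac13+\frac1{3m})N$ if some prime $p\equiv2\pmod3$ divides $N$, with $m$ the least such prime, and otherwise $\mu\le N/3$. For odd $N$ we have $m\ge5$, so $\mu\le\frac25N$. Consequently an odd $N$ with $\mu(\Gamma)\ge a$ must satisfy $N\ge 5a/2$, i.e.\ $N\ge\lceil 5a/2\rceil$.

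To exhibit the failure of upward closure concretely, take $2a+1$, which is odd. It is admissible only if $2a+1\ge 5a/2$, that is $a\le2$. So for every $a\ge3$ the order $2a$ is admissible while $2a+1$ is not.

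The only nontrivial ingredient is the Diananda--Yap bound for odd orders, which I will quote rather than reprove. Everything else is a direct reuse of the proof of Theorem~\ref{thm:floor}.
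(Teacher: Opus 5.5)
Your proposal is correct and follows the same route as the paper. The sum-free conclusion of Theorem~\ref{thm:floor} (Remark~\ref{rem:bs-star}) gives $\mu(\Gamma)\ge\alpha^*$; the disjointness of $A$ and $A+s$, together with the odd residues of $\Z_{2a}$, gives $N^*(a)=2a$; and the Diananda--Yap bound $\mu\le\frac25N$ for odd $N$ gives the odd-order restriction. Your explicit witness --- $2a$ admissible but $2a+1$ not for every $a\ge3$ --- usefully makes concrete the failure of upward closure, which the paper only asserts.
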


Since $N^*(a)=2a$, the bound $\max(n,N^*(\alpha^*))$ is
Theorem~\ref{thm:floor} restated: Corollary~\ref{cor:floor-sharp} does
\emph{not} improve the floor, and we do not claim it as a sharpening. What it
supplies is the set of \emph{admissible} orders, which is not upward closed,
and hence a search economy.

\begin{remark}[Search economy]
Corollary~\ref{cor:floor-sharp} is used directly by the solver of
Section~\ref{sec:examples}, which tries orders upward from the floor and skips
every odd order below $\lceil5\alpha^*/2\rceil$ without enumerating any group
of that order. For the double star $D_{3,3}$ we have $\alpha^*=4$, so the
floor is $8$ and the odd order $9$ is discarded at no cost; the optimum is
$15$. The saving is modest at these sizes but grows with $\alpha^*$, and it is
the only place where the exact form of $\mu$ enters the computation.
\end{remark}

We checked Theorem~\ref{thm:floor} against exhaustively computed $\nind$ on
all $139$ connected graphs with $4\le n\le6$: no violation, and the floor is
attained by $10.1\%$ of them.

\section{Exactly determined families}\label{sec:families}

\begin{theorem}[Paths]\label{thm:path}
$\nind(P_n)=n+1$ for $n\ge3$.
\end{theorem}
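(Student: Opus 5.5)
The plan is to prove the two inequalities separately: Proposition~\ref{prop:floor-n} supplies the lower bound, and an explicit cyclic host supplies the upper bound. Theorem~\ref{thm:floor} cannot help with the lower bound, since $\alpha(G[N(v)])\le 2$ in $P_n$ and the floor is only $\max(n,4)$.

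\emph{Lower bound.} By Proposition~\ref{prop:floor-n}, $\nind(P_n)=n$ would force $P_n$ to be an abelian Cayley graph. Every Cayley graph $\Cay(\Gamma,S)$ is $|S|$-regular. For $n\ge3$ the path $P_n$ has endpoints of degree $1$ and an interior vertex of degree $2$, so it is not regular. Hence $\nind(P_n)\ge n+1$.

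\emph{Upper bound.} Take $\Gamma=\Z_{n+1}$, $S=\{\pm1\}$ (so $\Cay(\Gamma,S)=C_{n+1}$), and $f(v_i)=i$ for $i=0,\dots,n-1$ along the path. In other words, $P_n$ is $C_{n+1}$ with the vertex $n$ deleted, and deleting a vertex always leaves an induced subgraph. To check this directly, note that two labels $i<j$ in $\{0,\dots,n-1\}$ are adjacent in the host iff $j-i\equiv\pm1\pmod{n+1}$. Since $1\le j-i\le n-1$, the case $j-i\equiv1$ forces $j-i=1$. The case $j-i\equiv -1$ would force $j-i=n$, which is impossible. So host adjacency among the image is exactly path adjacency. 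Equivalently, by Proposition~\ref{prop:2col}, $\DE=\{\pm1\}$ is disjoint from $\DN\subseteq\{\pm2,\dots,\pm(n-1)\}$, and $\pm1\notin\{\pm2,\dots,\pm(n-1)\}$ modulo $n+1$ because $n+1\ge4$.

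Both steps are routine. The only point that needs care is the wrap-around check: the single missing residue $n$ is what prevents the long difference $n-1\equiv-2$ from colliding with $\pm1$. The hypothesis $n\ge3$ is used only in the non-regularity step, since $P_2=K_2$ is itself a Cayley graph of $\Z_2$.
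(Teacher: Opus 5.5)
Your proof is correct and is the paper's proof. The lower bound comes from non-regularity via Proposition~\ref{prop:floor-n}, and the upper bound from the labelling $0,1,\dots,n-1$ in $\Z_{n+1}$ checked against Proposition~\ref{prop:2col}, including the same wrap-around observation that $d\equiv-1$ would force $d=n$. One cosmetic point: in your final restatement, $\pm1\notin\{\pm2,\dots,\pm(n-1)\}$ modulo $n+1$ holds because $2\le d\le n-1<n$, not because $n+1\ge4$; your preceding sentences already give the correct reason.
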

\begin{proof}
Label $P_n$ by $0,1,\dots,n-1$ in $\Z_{n+1}$. Then $\DE=\{\pm1\}$ and
$\DN=\{\pm d: 2\le d\le n-1\}$. No such $d$ is congruent to $\pm1$ modulo
$n+1$: $d\equiv1$ is excluded by $d\ge2$ and $d\le n-1<n+1$, and $d\equiv-1$
would force $d=n$, which exceeds $n-1$. Hence $\DE\cap\DN=\emptyset$ and
Proposition~\ref{prop:2col} applies. For the lower bound, $P_n$ is not
regular, hence not a Cayley graph, so $\nind>n$ by
Proposition~\ref{prop:floor-n}.
\end{proof}

\begin{theorem}[Complete bipartite graphs]\label{thm:bipartite}
$\nind(K_{a,b})=2\max(a,b)$.
\end{theorem}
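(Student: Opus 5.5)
Without loss of generality take $b\ge a\ge1$. We prove the lower bound $\nind(K_{a,b})\ge 2b$ and the upper bound $\nind(K_{a,b})\le 2b$ separately.

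\emph{Lower bound.} This follows from Theorem~\ref{thm:floor}. Pick a vertex $v$ on the side of size $a$. Its neighbourhood is the whole side of size $b$, which is an independent set, so $\alpha(G[N(v)])=b$. Hence $\nind(K_{a,b})\ge 2b$. (The term $n=a+b\le 2b$ is dominated.)

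\emph{Upper bound.} We give an explicit construction in $\Gamma=\Z_{2b}$. Let $H=2\Z_{2b}$ be the index-$2$ subgroup of even residues, of order $b$, and let $H+1$ be the odd residues. Map the $b$-side bijectively onto $H$, and map the $a$-side injectively into $H+1$; this is possible since $a\le b$.

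We then check Proposition~\ref{prop:2col}.
\begin{itemize}[nosep]
\item Every edge joins the two sides, so every edge difference is odd, i.e.\ $\DE\subseteq H+1$.
\item Every non-edge joins two vertices on the same side, so every non-edge difference is even, i.e.\ $\DN\subseteq H$.
\end{itemize}
Since $H$ and $H+1$ are disjoint, $\DE\cap\DN=\emptyset$, and Proposition~\ref{prop:2col} gives an induced embedding with $S=\DE$.

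Equivalently, take $S=H+1$ itself, which is symmetric and omits $0$. Then $\Cay(\Z_{2b},H+1)=K_{b,b}$, and $K_{a,b}$ is the subgraph induced on $H$ together with any $a$ odd residues. The same argument works in any abelian group of order $2b$ with an index-$2$ subgroup, matching the extremal sum-free coset of Corollary~\ref{cor:floor-sharp}.

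There is no genuine obstacle. The only care needed is to take the local floor at a vertex of the smaller side, so that the neighbourhood is the larger independent side, and to note the degenerate case $a=1$ (the star), which the same construction covers.
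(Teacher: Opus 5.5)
Your proof is correct and follows the paper's own argument. The lower bound is Theorem~\ref{thm:floor} applied at a vertex of the smaller side, and the upper bound places the two sides on the even and odd residues of $\Z_{2\max(a,b)}$ and checks the parity separation via Proposition~\ref{prop:2col}.
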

\begin{proof}
\emph{Upper bound.} Set $N=2\max(a,b)$, place one side on the even residues of
$\Z_N$ and the other on the odd residues. Every edge crosses the sides, so
every edge difference is odd; every non-edge lies inside a side, so every
non-edge difference is even. Apply Proposition~\ref{prop:2col}.
\emph{Lower bound.} A vertex of the smaller side has as its neighbourhood the
whole larger side, independent of size $\max(a,b)$; apply
Theorem~\ref{thm:floor}.
\end{proof}

With $a=1$ this gives $\nind(K_{1,q})=2q$, equal to $\nu(K_{1,q})$: for stars,
discarding the metric buys nothing. That special case is not new. By
\cite[Prop.~7.4]{babai-sos1985} an induced $q$-star in a Cayley graph of
$\Gamma$ is the same thing as a sum-free set of size $q$ in $\Gamma$; the
upper bound is then \cite[Prop.~7.10]{babai-sos1985} with $k=2$, and the lower
bound is the Diananda--Yap inequality $\mu(\Gamma)\le|\Gamma|/2$
\cite{diananda-yap1969}. The corresponding computation for
$\mathrm{rep}(K_{1,q})$, where the group is forced cyclic and the connection
set forced to be the units, is carried out in
\cite{akhtar-evans-pritikin2010}. What is new in Theorem~\ref{thm:bipartite}
is the range $b\ge a\ge2$, where no sum-free set of the full neighbourhood is
available and the parity construction must be checked directly.

\begin{remark}[Complete multipartite graphs: an upper bound only]
\label{rem:multipartite}
Replacing the parity map by a homomorphism onto $\Z_k$ places the $i$th part
of $K_{a_1,\dots,a_k}$ on the residues congruent to $i$, giving
$\nind\le k\max_i a_i$: edge differences are then non-zero modulo $k$ and
non-edge differences are zero modulo $k$. This bound is \emph{not} tight in
general, so we do not claim complete multipartite graphs as a determined
family. When the parts are balanced the graph is already a circulant and
Proposition~\ref{prop:floor-n} gives $\nind=n$, so the construction is
wasteful; when they are not, it can overshoot the floor badly. For
$K_{1,1,5}$ the floor of Theorem~\ref{thm:floor} is $\max(7,10)=10$ while the
construction gives $15$, and $N=10$ is not achievable by the obvious
refinement: placing the $5$-part on the even residues of $\Z_{10}$ forces the
two remaining vertices onto odd residues, whose difference is even and
therefore collides with the internal differences of the $5$-part. Determining
$\nind$ for unbalanced complete multipartite graphs is open; the
representation-number analogue was settled in
\cite{akhtar-evans-pritikin2012}.
\end{remark}

\begin{proposition}
$\nind(C_m)=m$, $\nind(K_n)=n$, and $\nind=n$ for every circulant, by
Proposition~\ref{prop:floor-n}.
\end{proposition}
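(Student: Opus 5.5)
The plan is to reduce all three claims to Proposition~\ref{prop:floor-n}, which says $\nind(G)=n$ exactly when $G$ is an abelian Cayley graph. So it suffices to show that $C_m$, $K_n$ and every circulant are Cayley graphs of an abelian group on their own vertex set, with $|\Gamma|$ equal to the number of vertices. Concretely, for each graph I will name $\Gamma$ and $S$, check that $S$ is a legitimate connection set, and then invoke the proposition. The identity labelling is then an induced embedding of order $n$, and the lower bound $\nind\ge n$ is injectivity, which is the first term of Theorem~\ref{thm:floor}.

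For the circulant case, which contains the other two, recall that a circulant on $n$ vertices is by definition $\Cay(\Z_n,S)$ for some $S\subseteq\Z_n\setminus\{0\}$ with $S=-S$. Taking $\Gamma=\Z_n$ and $f$ the identity, the definition of induced embedding is satisfied tautologically: $f(u)\sim f(v)$ in the host iff $v-u\in S$ iff $uv\in E(G)$. Equivalently, $\DE=S$ and $\DN=\Z_n\setminus(S\cup\{0\})$ are disjoint, so Proposition~\ref{prop:2col} applies. Since $\Z_n$ is abelian, Proposition~\ref{prop:floor-n} gives $\nind=n$.

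For the specialisations:
\begin{itemize}
\item $C_m=\Cay(\Z_m,\{\pm1\})$, valid for $m\ge3$. When $m=3$ the set is $\{1,2\}$, giving $K_3$, which is consistent.
\item $K_n=\Cay(\Z_n,\Z_n\setminus\{0\})$. For $n=1$ the connection set is empty and the statement is trivial.
\end{itemize}

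There is essentially no obstacle here. The only points needing care are the degenerate small cases ($m\ge3$ for cycles, and $n=1,2$ for complete graphs) and the fact that $S$ must be symmetric and avoid $0$, both of which hold by construction. No search or sum-free argument is needed, since the floor $n$ is met directly.
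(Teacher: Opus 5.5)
Your proposal is correct and takes the same approach as the paper. The paper's proof is just the citation of Proposition~\ref{prop:floor-n}: $C_m$, $K_n$ and every circulant are Cayley graphs of $\Z_n$ on their own vertex set, so the identity labelling gives the upper bound $n$ and injectivity gives the lower bound; your explicit connection sets $\{\pm1\}$ and $\Z_n\setminus\{0\}$, and your check via Proposition~\ref{prop:2col}, simply spell that out.
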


\subsection{Cartesian products}\label{sec:products}

The induced condition, unlike the isometric one, is stable under Cartesian
products in the strongest possible sense: the hosts simply multiply.

\begin{theorem}[Product bound]\label{thm:product}
$\nind(G_1\square G_2)\le\nind(G_1)\,\nind(G_2)$.
\end{theorem}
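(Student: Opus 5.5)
We take optimal embeddings of the two factors and use their product as an embedding of $G_1\square G_2$ into the direct product of the hosts. Fix finite abelian groups $\Gamma_1,\Gamma_2$ with $|\Gamma_i|=\nind(G_i)$ and induced embeddings $f_i:V(G_i)\to\Gamma_i$. By Proposition~\ref{prop:2col} these satisfy $D_E^{(i)}\cap D_{\bar E}^{(i)}=\emptyset$, where $D_E^{(i)},D_{\bar E}^{(i)}$ are the difference sets of $f_i$. Define
\[
f(u,v)=\bigl(f_1(u),f_2(v)\bigr)\in\Gamma_1\times\Gamma_2 .
\]
This map is injective, and the target is abelian of order $\nind(G_1)\nind(G_2)$. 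So it suffices to verify $\DE\cap\DN=\emptyset$ for $f$ and then invoke Proposition~\ref{prop:2col} again.

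\emph{Edge differences.} In $G_1\square G_2$, two distinct vertices $(u,v)$ and $(u',v')$ are adjacent iff either $v=v'$ and $uu'\in E(G_1)$, or $u=u'$ and $vv'\in E(G_2)$. Hence every edge difference has one of two forms:
\begin{itemize}
\item $(d,0)$ with $d\in D_E^{(1)}$;
\item $(0,d)$ with $d\in D_E^{(2)}$.
\end{itemize}
By injectivity of $f_1$ and $f_2$, in each case exactly one coordinate is non-zero.

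\emph{Non-edge differences.} Take a distinct non-adjacent pair $(u,v),(u',v')$ and split into three cases.
\begin{itemize}
\item If $u\ne u'$ and $v\ne v'$, the difference has both coordinates non-zero, again by injectivity of the $f_i$. It therefore cannot equal any edge difference.
\item If $v=v'$, then $u\ne u'$ and $uu'\notin E(G_1)$, so the difference is $(d,0)$ with $d\in D_{\bar E}^{(1)}$. It could only collide with an edge difference of the form $(d,0)$, $d\in D_E^{(1)}$, which the disjointness for $f_1$ excludes.
\item The case $u=u'$ is symmetric, using the disjointness for $f_2$.
\end{itemize}
Therefore $\DE\cap\DN=\emptyset$, and $S=\DE$ is a valid connection set on $\Gamma_1\times\Gamma_2$.

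There is no real obstacle in this argument. The only point needing care is the case split on which coordinates of the two vertices coincide. In particular, one must use injectivity of each $f_i$, not merely of $f$, to guarantee that a non-edge with both coordinates different has both difference coordinates non-zero. This is exactly what keeps the non-edges of ``diagonal'' type from colliding with the axis-aligned edge differences.
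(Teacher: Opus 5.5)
Your proof is correct and is essentially the paper's: you embed by $f_1\times f_2$ into $\Gamma_1\times\Gamma_2$ with an axis-aligned connection set, and you use injectivity of each factor to keep the diagonal non-edges away from the edge differences. The only difference is framing: you verify the cross-collision condition of Proposition~\ref{prop:2col}, so your connection set is $(D_E^{(1)}\times\{0\})\cup(\{0\}\times D_E^{(2)})$, whereas the paper starts from arbitrary connection sets $S_i$ and checks adjacency directly with $S=(S_1\times\{0\})\cup(\{0\}\times S_2)$.
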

\begin{proof}
Let $f_i:V(G_i)\to\Gamma_i$ be induced embeddings with connection sets $S_i$,
and put $\Gamma=\Gamma_1\times\Gamma_2$,
$S=(S_1\times\{0\})\cup(\{0\}\times S_2)$ and $f=f_1\times f_2$. Then $S$ is
symmetric and $0\notin S$, and $f$ is injective. The difference of the images
of $(u_1,u_2)$ and $(v_1,v_2)$ is $(a,b)$ with $a=f_1(v_1)-f_1(u_1)$ and
$b=f_2(v_2)-f_2(u_2)$, and $(a,b)\in S$ if and only if either $b=0$ and
$a\in S_1$, or $a=0$ and $b\in S_2$. Since $f_1$ is injective, $a=0$ if and
only if $u_1=v_1$; and when $u_1\ne v_1$, inducedness of $f_1$ gives
$a\in S_1$ if and only if $u_1v_1\in E(G_1)$. Symmetrically for $b$. Hence the
images are adjacent exactly when $u_2=v_2$ and $u_1v_1\in E(G_1)$, or $u_1=v_1$
and $u_2v_2\in E(G_2)$ --- which is adjacency in $G_1\square G_2$.
\end{proof}

The bound is not an equality in general, but it already settles the asymptotics
of the grid families, which are the natural test case because they are
irregular and therefore not Cayley graphs.

\begin{corollary}[Grids]\label{cor:grid}
$\nind(P_{m_1}\square\cdots\square P_{m_d})\le\prod_i(m_i+1)$. In particular
the square grid $P_m\square P_m$ on $n=m^2$ vertices satisfies
$n\le\nind\le n+2\sqrt n+1$, so $\nind=(1+o(1))n$; and for fixed $d$ the
$d$-dimensional cubic grid on $n=m^d$ vertices satisfies
$\nind\le(1+m^{-1})^dn=(1+o(1))n$.
\end{corollary}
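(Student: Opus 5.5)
The plan is to obtain the general bound by induction on $d$ from Theorem~\ref{thm:product}, with Theorem~\ref{thm:path} as the base input, and then to read off the two asymptotic statements. First we record that $\nind(P_m)\le m+1$ for every $m\ge1$. For $m\ge3$ this is Theorem~\ref{thm:path}. For $m=1,2$, $P_1=K_1$ and $P_2=K_2$ are Cayley graphs of $\Z_1$ and $\Z_2$, so Proposition~\ref{prop:floor-n} gives $\nind(P_m)=m\le m+1$.

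For the general bound, the Cartesian product is associative, so $P_{m_1}\square\cdots\square P_{m_d}=(P_{m_1}\square\cdots\square P_{m_{d-1}})\square P_{m_d}$. Theorem~\ref{thm:product} then gives
\[
\nind(P_{m_1}\square\cdots\square P_{m_d})\le\nind(P_{m_1}\square\cdots\square P_{m_{d-1}})\,\nind(P_{m_d}).
\]
Induction on $d$ therefore yields $\nind\le\prod_i\nind(P_{m_i})\le\prod_i(m_i+1)$. Theorem~\ref{thm:product} is stated for arbitrary finite graphs $G_1,G_2$, so no connectivity hypothesis is needed in the induction step.

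For the square grid, the lower bound $\nind\ge n=m^2$ is the injectivity term of Theorem~\ref{thm:floor}. The upper bound is the case $d=2$, $m_1=m_2=m$: it gives $(m+1)^2=n+2\sqrt n+1$. Dividing by $n$ gives $1\le\nind/n\le1+2/\sqrt n+1/n\to1$, which is the claim $\nind=(1+o(1))n$.

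For the cubic grid with fixed $d$, the bound is $(m+1)^d=(1+m^{-1})^d m^d=(1+m^{-1})^d n$. Since $d$ is fixed and $m\to\infty$, we have $(1+m^{-1})^d\to1$, so $\nind\le(1+o(1))n$. The bound $\nind\ge n$ again comes from Theorem~\ref{thm:floor}.

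There is no real obstacle here. The only points needing care are the degenerate small paths, handled above, and keeping $d$ fixed in the $o(1)$ statement, since $(1+m^{-1})^d$ does not tend to $1$ if $d$ grows with $m$.
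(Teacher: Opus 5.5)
Your proof is correct and is the paper's own argument: the paper proves this in one line, iterating Theorem~\ref{thm:product} over the factors, using Theorem~\ref{thm:path} for each path, and getting the lower bound from injectivity. Your explicit treatment of $P_1$ and $P_2$ is a small point of care that the paper leaves implicit, since Theorem~\ref{thm:path} is stated only for $n\ge3$.
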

\begin{proof}
Theorem~\ref{thm:product} with Theorem~\ref{thm:path}; the lower bound is
injectivity.
\end{proof}

So the grid families sit asymptotically \emph{at} the injectivity floor, in
sharp contrast to the generic behaviour of Section~\ref{sec:magnitude}. We
verified the constructions of Theorem~\ref{thm:product} directly against
Definition~\ref{def:eta} for $P_3\square P_3$ ($N=16$, $n=9$),
$P_3\square P_4$ ($N=20$, $n=12$), $P_4\square P_4$ ($N=25$, $n=16$),
$K_{1,3}\square P_3$ and $K_{1,3}\square K_{1,3}$. The bound is close but not
exact even in the smallest case: exhausting every abelian group of every order
from the floor $9$ up gives $\nind(P_3\square P_3)=15$, attained in
$\Z_{15}$, against the product bound $16$. By contrast the torus
$C_{m_1}\square\cdots\square C_{m_d}$ is a circulant, so
Proposition~\ref{prop:floor-n} gives $\nind=n$ with no work; the interest of
Corollary~\ref{cor:grid} is that path-grids are irregular, hence have
$\nind>n$, yet lose only a lower-order term. The measured values follow the
predicted decay:

\begin{table}[h]
\centering\small
\caption{Grid families: one exact value and two upper bounds.}
\label{tab:grids}
\begin{tabular}{@{}lcccllc@{}}
\toprule
grid & $n$ & floor & exact $\nind$ & upper bound & source & $\nind/n$\\
\midrule
$P_3\square P_3$ & $9$  & $9$  & $15$ & --- & exhaustion, $\Z_{15}$ & $1.67$\\
$P_4\square P_4$ & $16$ & $16$ & --- & $24$ & search, $\Z_{24}$ & $\le1.50$\\
$P_5\square P_5$ & $25$ & $25$ & --- & $36$ & Cor.~\ref{cor:grid} & $\le1.44$\\
\bottomrule
\end{tabular}
\end{table}

Only the first row is an exact value; the other two are upper bounds, and the
third is Corollary~\ref{cor:grid} itself rather than a computation, so the
apparent decay $1.67\to1.50\to1.44$ is a measurement only at $n=9$ and is
otherwise the decay of the prediction. At $n=25$ the search was interrupted
inside order $37$ --- orders $25$ to $36$ were \emph{not} exhausted, so nothing
is certified below $37$ --- and the constructive bound $36$ is the better of
the two and is the one quoted.

\subsection{A reduction for bipartite graphs}\label{sec:bipartite}

Theorem~\ref{thm:bipartite} generalises into a reduction that will be the
basis of Conjecture~\ref{conj:trees}.

\begin{proposition}[Bipartite reduction]\label{prop:biparred}
Let $G$ be bipartite with parts $A$ and $B$. Suppose there are injections
$a:A\to\Z_M$ and $b:B\to\Z_M$ such that, writing
$D_E=\{b(j)-a(i):ij\in E(G)\}$ and $D_N=\{b(j)-a(i):ij\notin E(G)\}$ for
$i\in A$, $j\in B$, we have $(D_E\cup-D_E)\cap D_N=\emptyset$. Then
$\nind(G)\le2M$.
\end{proposition}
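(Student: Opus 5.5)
Our plan is to lift the two labellings into $\Gamma=\Z_M\times\Z_2$ and place $A$ on the coset $\Z_M\times\{0\}$ and $B$ on $\Z_M\times\{1\}$. Define $f(i)=(a(i),0)$ for $i\in A$ and $f(j)=(b(j),1)$ for $j\in B$. This map is injective because $a$ and $b$ are injective and the two parts lie in different cosets. Then $|\Gamma|=2M$. By Proposition~\ref{prop:2col} it suffices to check that the symmetric edge-difference set $\DE$ of $f$ is disjoint from the symmetric non-edge-difference set $\DN$.

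We sort the differences by their second coordinate.
\begin{itemize}
\item A pair inside $A$ or inside $B$ is always a non-edge, since $G$ is bipartite. Its difference has second coordinate $0$.
\item A pair $i\in A$, $j\in B$ has difference $\pm(b(j)-a(i),1)$, with second coordinate $1$. In $\Z_2$ we have $-1=1$, so the negative is $(a(i)-b(j),1)$.
\end{itemize}
Every edge difference therefore has second coordinate $1$. Consequently $\DE$ can only meet the part of $\DN$ coming from cross non-edges, and the coordinate-$0$ non-edges are harmless.

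It remains to compare cross differences. The first coordinates of cross-edge differences form $D_E\cup-D_E$. The first coordinates of cross non-edge differences form $D_N\cup-D_N$. A collision between them is one of two kinds.
\begin{itemize}
\item $x\in(D_E\cup-D_E)\cap D_N$, which is excluded by hypothesis.
\item $x\in(D_E\cup-D_E)\cap(-D_N)$. Negating gives $-x\in(-D_E\cup D_E)\cap D_N$, which is again excluded.
\end{itemize}
Hence $\DE\cap\DN=\emptyset$. Proposition~\ref{prop:2col} then yields an induced embedding with $S=\DE$, and so $\nind(G)\le 2M$.

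The only delicate step is the sign bookkeeping in the cross differences. The symmetrisation is needed only on $D_E$: the hypothesis is symmetric under negation, so it automatically covers collisions with $-D_N$. A second point to check is that the $\Z_2$ factor supplies the parity separation that $\Z_{2M}$ supplied in Theorem~\ref{thm:bipartite}. No cyclicity of the host is needed, because $\nind$ minimises over all abelian groups.
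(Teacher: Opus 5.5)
Your proof is correct and is essentially the paper's: the paper uses the same host $\Z_2\times\Z_M$ (coordinates swapped) and the same placement of $A$ and $B$ on the two cosets. Its explicit connection set $S=\{1\}\times(D_E\cup-D_E)$ is exactly the edge-difference set your appeal to Proposition~\ref{prop:2col} produces, and where the paper checks adjacency pair by pair against $S$ directly, you verify the disjointness condition of Proposition~\ref{prop:2col}, which spells out the sign bookkeeping for collisions with $-D_N$ that the paper leaves implicit in the symmetry of $S$.
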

\begin{proof}
Take $\Gamma=\Z_2\times\Z_M$, send $i\in A$ to $(0,a(i))$ and $j\in B$ to
$(1,b(j))$, and set $S=\{1\}\times(D_E\cup-D_E)$, which is symmetric and
avoids $0$. Two vertices in the same part differ by an element of
$\{0\}\times\Z_M$, disjoint from $S$, and they are non-adjacent since $G$ is
bipartite. A cross pair $ij$ differs by $(1,b(j)-a(i))$, which lies in $S$
exactly when $b(j)-a(i)\in D_E\cup-D_E$ --- by hypothesis exactly when
$ij\in E(G)$.
\end{proof}

The reduction converts an embedding problem into a purely additive one: find a
labelling of the two sides in $\Z_M$ whose cross-differences separate edges
from non-edges. It is tight where we can check it. Exhaustive search over $M$
gives $M=q$ for $K_{1,q}$ and $M=3$ for $P_5$, so the bound $2M$ reproduces
the exact values $2q$ and $6$; for the double star $D_{3,3}$ it gives $M=9$
and hence $18$, against the true $\nind=15$, the loss being that the optimum
there has odd order and no construction with a $\Z_2$ factor can reach it.

\section{Two search-free constructions}\label{sec:constructions}

\subsection{Sidon sets: the classical universal $O(n^2)$ construction}
\label{sec:sidon}

This subsection is due to Babai and S\'os \cite{babai-sos1985} and is
reproduced only because it supplies the upper bound against which the rest of
the paper is measured. $B\subseteq\Z_N$ is a \emph{Sidon set}
\cite{erdos-turan1941} if all differences of distinct pairs are distinct.

\begin{theorem}[Sidon construction; {\cite[Prop.~3.1]{babai-sos1985}}]
\label{thm:sidon}
If $B\subseteq\Z_N$ is Sidon with $|B|\ge n$ and $f:V(G)\to B$ is injective,
then $f$ is an induced embedding with $S=\DE$.
\end{theorem}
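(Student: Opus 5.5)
By Proposition~\ref{prop:2col} it suffices to show $\DE\cap\DN=\emptyset$ for the labelling $f$; then $S=\DE$ is a legitimate connection set and $f$ is an induced embedding into $\Cay(\Z_N,\DE)$. Injectivity of $f$ is given, so every difference in either set is nonzero. We argue by contradiction and reduce any cross-collision to an equation that the Sidon property forbids.

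Suppose some $x$ lies in both sets. Since $\DE$ and $\DN$ are each closed under negation, we may orient the pairs so that
\[
  f(v)-f(u)=x=f(v')-f(u'),
\]
with $uv\in E(G)$, $u'v'\notin E(G)$, $u\ne v$ and $u'\ne v'$. The sign ambiguity is absorbed by swapping $u'$ and $v'$, which is harmless because non-adjacency is symmetric. Writing $b_1=f(v)$, $b_2=f(u)$, $b_3=f(v')$, $b_4=f(u')$, all in $B$, we obtain $b_1-b_2=b_3-b_4$ with $b_1\ne b_2$ and $b_3\ne b_4$.

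We then invoke the Sidon property, read as: differences of distinct ordered pairs are distinct. This gives $(b_1,b_2)=(b_3,b_4)$, and injectivity of $f$ then gives $v=v'$ and $u=u'$. So $uv$ is simultaneously an edge and a non-edge, which is impossible. This finishes the argument.

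The main obstacle is how the Sidon definition is read. The paper says ``all differences of distinct pairs are distinct,'' and this must be interpreted as ordered pairs $(b,b')$ with $b\ne b'$. Under that reading the equation $b_1-b_2=b_3-b_4$ forces $(b_1,b_2)=(b_3,b_4)$ directly. There is one subtlety in characteristic $2$: a difference $d$ with $2d=0$ satisfies $b_1-b_2=b_2-b_1$, so the pairs $(b_1,b_2)$ and $(b_2,b_1)$ give the same difference. This degenerate case causes no trouble, because it again identifies the same unordered pair $\{u,v\}=\{u',v'\}$. So the conclusion holds under any standard reading, since the Sidon property is equivalent to the statement that $b_1+b_4=b_2+b_3$ forces $\{b_1,b_4\}=\{b_2,b_3\}$.

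In that sum formulation, the case $b_1=b_2$ is excluded by $u\ne v$. The remaining possibility is $b_1=b_3$ and $b_4=b_2$, which is exactly the identification of the two pairs used above.
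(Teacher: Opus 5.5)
Your proof is correct and is essentially the paper's argument: a cross-collision between $\DE$ and $\DN$ is an equality between two differences of elements of $B$. The Sidon property identifies the two pairs, and injectivity of $f$ then gives $\{u,v\}=\{u',v'\}$, contradicting that one pair is an edge and the other a non-edge. The paper states it directly (a difference lying in $\DE$ can only come from an edge, so $uv\in E(G)$) and uses only the unordered conclusion. Your discussion of differences of order $2$ is therefore harmless but unnecessary. Under the ordered reading such differences cannot even occur, since $(b,b')$ and $(b',b)$ would then have the same difference.
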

\begin{proof}
If $f(v)-f(u)\in\DE$ it equals $\pm(f(y)-f(x))$ for an edge $xy$; the Sidon
property forces $\{u,v\}=\{x,y\}$, so $uv\in E$. Hence $\DE\cap\DN=\emptyset$.
\end{proof}

\begin{corollary}[{\cite[Sec.~5 and Rem.~5.6]{babai-sos1985}}]\label{cor:singer}
Using a Singer difference set \cite{singer1938} of size $q+1$ in
$\Z_{q^2+q+1}$ for the least prime power $q\ge n-1$, every graph on $n$
vertices satisfies $\nind(G)\le q^2+q+1=(1+o(1))n^2$, \emph{with no search}:
the labelling is any injection into $B$. Denser Sidon sets in other groups
follow Bose and Chowla \cite{bose-chowla1962}; for arbitrary, possibly
non-abelian groups the corresponding bound is $O(n^3)$
\cite{babai-sos1985}, improved to $(2+\sqrt3)n^3$ by Godsil and Imrich
\cite{godsil-imrich1987}.
\end{corollary}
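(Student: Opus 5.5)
The plan is to chain Theorem~\ref{thm:sidon} with the existence of Singer difference sets and a prime gap estimate. First, fix $n$ and let $q$ be the least prime power with $q\ge n-1$. Singer's theorem \cite{singer1938} supplies a perfect difference set $B\subseteq\Z_{q^2+q+1}$ of size $q+1$. Here every nonzero residue arises exactly once as a difference $b-b'$ with $b,b'\in B$ distinct. In particular all differences of distinct ordered pairs are distinct, so $B$ is Sidon in the sense of Section~\ref{sec:sidon}.

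Second, since $|B|=q+1\ge n$, any injection $f:V(G)\to B$ exists. By Theorem~\ref{thm:sidon}, such an $f$ is an induced embedding into $\Cay(\Z_{q^2+q+1},\DE)$, which gives $\nind(G)\le q^2+q+1$. No search is involved, because the argument of Theorem~\ref{thm:sidon} never uses which injection is chosen. I will check one point: the Sidon property as stated there compares unordered pairs up to sign. That is, $f(v)-f(u)=\pm(f(y)-f(x))$ must force $\{u,v\}=\{x,y\}$. The sign $-$ case is handled by swapping $x,y$, which is harmless for a perfect difference set.

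Third, the asymptotic claim $q^2+q+1=(1+o(1))n^2$ needs $q=(1+o(1))n$ for the least prime power $q\ge n-1$. This follows from the prime number theorem, or more concretely from any Bertrand-type result guaranteeing a prime in $[x,x+o(x)]$, for instance Baker--Harman--Pintz. This is the only step that is not purely combinatorial, and I regard it as the main obstacle in the sense that it rests on an external analytic input. If one only wants $O(n^2)$, Bertrand's postulate ($q<2(n-1)$) already suffices and gives $\nind(G)<4n^2$.

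The remaining sentences of the corollary, on Bose--Chowla sets and the $O(n^3)$ and $(2+\sqrt3)n^3$ bounds for general groups, are citations and not claims proved here. I will simply attribute them to \cite{bose-chowla1962,babai-sos1985,godsil-imrich1987}.
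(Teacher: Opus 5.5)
Your proposal is correct and takes the same approach as the paper, which states the corollary without a separate proof as a direct application of Theorem~\ref{thm:sidon} to a Singer perfect difference set of size $q+1\ge n$; that set is Sidon, so any injection into it is an induced embedding. Your prime-number-theorem argument for $q=(1+o(1))n$, with Bertrand's postulate as a fallback giving a crude bound below $4n^2$, supplies the justification of the asymptotic that the paper leaves implicit, and you are right to treat the Bose--Chowla, $O(n^3)$ and $(2+\sqrt3)n^3$ clauses as citations rather than claims to prove.
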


For the karate-club graph ($n=34$) this gives $N=1407$ against the binary
isometric baseline $2^{33}\approx8.6\times10^9$.

\begin{remark}[Sidon is sufficient, not necessary]\label{rem:notsidon}
Sidon demands that \emph{all} $n(n-1)$ pair-differences be distinct, whereas
Proposition~\ref{prop:2col} forbids only cross-collisions. The gap is not
merely large; it is forced. A Sidon set $B\subseteq\Gamma$ with $|B|=n$ has
$n(n-1)$ pairwise distinct non-zero differences, so $|\Gamma|\ge n^2-n+1$.
Hence if $\nind(G)<n^2-n+1$ then no optimal labelling of $G$ is a Sidon set.
Every value in Table~\ref{tab:zoo} is linear in $n$, so the hypothesis holds
throughout and the conclusion needs no computation; on any family with $\nind$
linear in $n$ the Sidon construction is a multiplicative $\Theta(n)$ away from
the optimum. It is an existence certificate with a clean bound, not a route to
the optimum. It also produces
dense hosts: $|S|=2m$, against $|S|\le2t$ for the quotient route of
Section~\ref{sec:algebraic}.
\end{remark}

\subsection{Parity and coset constructions}

Theorem~\ref{thm:bipartite} is the second search-free construction, optimal on
its family. Its mechanism is a homomorphism $\pi:\Z_N\to\Z_2$ under which
every edge crosses the fibres and every non-edge stays within one --- which is
exactly complete multipartiteness. For a general graph a proper colouring does
\emph{not} suffice, since non-edges between colour classes also give
differences outside the kernel and may collide with edges; we verified this
failure on $P_5$, the bull, the Petersen graph and $P_3\square P_3$.

\section{The order of magnitude of \texorpdfstring{$\nind$}{eta}}
\label{sec:magnitude}

Corollary~\ref{cor:singer} is best possible up to a factor of two, and this
has been known since \cite{babai-sos1985}. We record the statement in the
present notation, and verify that the counting argument survives the
minimisation over all abelian groups of a given order, which is the one point
at which our setting differs from theirs.

\begin{proposition}[Order of magnitude; counting bound from
{\cite[Prop.~2.3]{babai-sos1985}}]\label{prop:magnitude}
For every graph $G$ on $n$ vertices, $\nind(G)\le(1+o(1))n^2$. Conversely, for
all but a $o(1)$ fraction of the graphs on $n$ vertices,
$\nind(G)\ge(\tfrac12-o(1))n^2$. In particular $\nind(G)=\Theta(n^2)$ for
almost all graphs, with the constant confined to $[\tfrac12,1]$.
\end{proposition}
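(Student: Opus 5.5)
The proof has two halves: an upper bound read off from the Singer construction, and a lower bound by counting embeddings in a host of order $N$.

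\emph{Upper bound.} Take Corollary~\ref{cor:singer} as given. For every $n$-vertex graph it gives $\nind(G)\le q^2+q+1$, where $q$ is the least prime power with $q\ge n-1$. To see that this is $(1+o(1))n^2$, I would use prime gaps: by the prime number theorem there is a prime in $[n-1,(1+\varepsilon)(n-1)]$ for every fixed $\varepsilon>0$ once $n$ is large, so $q=(1+o(1))n$ and $q^2+q+1=(1+o(1))n^2$. Baker--Harman--Pintz gives a sharper error but is not needed. No search enters, since any injection into the Singer set works by Theorem~\ref{thm:sidon}.

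\emph{Lower bound.} I would count pairs (host, labelling) that can realise some labelled graph on $[n]$ in a group of order $N$. By Lemma~\ref{lem:gen} and Proposition~\ref{prop:2col}, an embedding is determined by four data: the group $\Gamma$, the injection $f:[n]\to\Gamma$, and the symmetric connection set $S$; moreover $G$ is recovered from $(\Gamma,f,S)$. Counting $S$ naively gives $2^{N}$ choices, which is too many when $N\approx n^2/2$. The standard fix, which I believe is the Babai--S\'os device, is to note that only $S\cap\DE\cup\DN$ matters. Since $|\DE\cup\DN|\le n(n-1)$, the graph is determined by $\Gamma$, $f$, and a subset of the at most $n(n-1)$ differences, giving at most $2^{n(n-1)}$ choices. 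That is still too many against $2^{\binom n2}$ labelled graphs.

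So the count must go through the differences themselves. I would use translation normalisation $f(1)=0$, leaving $N^{n-1}$ choices of labelling. Given $f$, the graph is determined by the $2$-colouring of the distinct difference classes, one class per $\pm$-pair. The number of such classes is at most $\min\bigl(\binom n2,\lceil N/2\rceil\bigr)$, since $\Gamma\setminus\{0\}$ splits into at most about $N/2$ classes $\{d,-d\}$. Hence the number of labelled graphs realisable in groups of order at most $N$ is at most
\[
\sum_{N'\le N}A(N')\,N'^{\,n-1}\,2^{N'/2+1},
\]
where $A(N')\le N'^{\log_2 N'}$ bounds the number of abelian groups of order $N'$. The key point of this step is that the minimisation over all abelian groups costs only this subexponential factor, which is negligible.

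Now set $N=(1-\varepsilon)n^2/2$. The displayed bound is at most $2^{(1-\varepsilon)n^2/4+O(n\log n)}=o\bigl(2^{\binom n2}\bigr)$. Each unlabelled graph accounts for at most $n!$ labelled ones, but I would simply work with labelled graphs throughout. It follows that all but an $o(1)$ fraction of graphs have $\nind>(\tfrac12-\varepsilon)n^2$, and letting $\varepsilon\to0$ slowly gives $(\tfrac12-o(1))n^2$.

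The main obstacle is making the class-count bound honest. A graph is fixed by which classes lie in $S$, but the class count $\lceil N/2\rceil$ must be compared with $\binom n2\approx n^2/2$. This gives exactly the threshold $N\approx n^2$, not $n^2/2$; since $2^{N/2}$ versus $2^{n^2/2}$ balances at $N\approx n^2$, I suspect the claimed constant $\tfrac12$ comes from this slack. I would therefore redo the exponent carefully, and if it yields $(1-o(1))n^2$, report that the weaker stated $\tfrac12$ follows a fortiori.
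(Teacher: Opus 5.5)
Your upper bound is fine; the prime-gap step is exactly what the $(1+o(1))$ in Corollary~\ref{cor:singer} conceals. Counting labelled graphs with translation-normalised labellings is also a legitimate variant of the paper's count.

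The gap is the class count. It is false that $\Gamma\setminus\{0\}$ splits into at most $\lceil N/2\rceil$ classes $\{d,-d\}$. An element of order $2$ forms a class by itself, so the number of classes is $(N-1+i)/2$, where $i$ is the number of involutions. This exceeds $N/2$ as soon as the $2$-torsion of $\Gamma$ has rank at least $2$: it is already $9$ for $\Z_4\times\Z_4$, the Petersen host, and it equals $N-1$ for $\Z_2^k$.

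Concretely, with $e_i$ the standard basis vectors, the fifteen pairwise sums of $\{0,e_1,e_2,e_3,e_4,e_1+e_2+e_3+e_4\}\subseteq\Z_2^4$ are exactly the fifteen non-zero vectors. So this single labelling in a group of order $16$ realises all $2^{15}$ labelled graphs on six vertices, whereas the factor $2^{N'/2+1}$ in your sum allows only $2^{9}$. Since $\nind$ minimises over all abelian groups, such hosts cannot be set aside, and the factor in your sum must be $2^{\min(\binom n2,\,N'-1)}$.

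Relatedly, your dismissal of the naive count $2^N$ as ``too many'' is mistaken. It is exactly what the paper uses, and it suffices because the target is $(\tfrac12-\varepsilon)n^2$, not $\tfrac12 n^2$.

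With the count corrected, your choice $N=(1-\varepsilon)n^2/2$ still works: the exponent $(1-\varepsilon)n^2/2+O(n\log n)$ lies below $\binom n2$ by $\Omega(\varepsilon n^2)$. So the proposition as stated follows, and your proof becomes the paper's.

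What has to go is your closing plan to redo the exponent and report $(1-o(1))n^2$. The slack you sensed is precisely the involutions. Your argument does give $(1-o(1))n^2$ for hosts with at most one involution, in particular for the cyclic minimum $\nind_{\mathrm{cyc}}$. Over all abelian groups, however, counting alone does not exclude elementary abelian $2$-groups of order about $n^2/2$. Whether the constant is $\tfrac12$ or $1$ is exactly Problem~\ref{prob:constant}, which the paper leaves open.
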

\begin{proof}
The upper bound is Corollary~\ref{cor:singer}. For the lower bound, count the
graphs realisable with a small host. Fix $M$. If
$M=\prod_ip_i^{a_i}$ then the number of abelian groups of order $M$ is
$\prod_iP(a_i)\le\prod_i2^{a_i}=2^{\sum_ia_i}\le2^{\log_2M}=M$, where $P$ is
the partition function. Given such a $\Gamma$, there are at most $2^{M}$
symmetric subsets $S\subseteq\Gamma\setminus\{0\}$ and at most $M^n$ injections
$f:V\to\Gamma$, and the triple $(\Gamma,S,f)$ determines the graph. Hence the
number of isomorphism classes of $n$-vertex graphs $G$ with $\nind(G)\le N$ is
at most
\[
  \sum_{M\le N}M\cdot2^{M}\cdot M^{n}\;\le\;N^{\,n+2}\,2^{N}.
\]
There are at least $2^{\binom n2}/n!$ isomorphism classes of graphs on $n$
vertices. Taking logarithms, if $N=(\tfrac12-\varepsilon)n^2$ then
$\log_2(N^{n+2}2^N)=(\tfrac12-\varepsilon)n^2+O(n\log n)$, while
$\log_2\bigl(2^{\binom n2}/n!\bigr)=\tfrac12n^2-O(n\log n)$. For large $n$ the
former is smaller, so all but a vanishing fraction of graphs have
$\nind(G)>N$.
\end{proof}

Three consequences deserve to be stated plainly, since each corrects an
impression that the small-graph data of Section~\ref{sec:examples} invites.

\begin{itemize}[leftmargin=1.4em,itemsep=2pt]
\item \emph{$\nind(G)=O(n)$ is false.} No linear universal bound exists, and
no search-free construction can achieve one. The question is only meaningful
for restricted classes.
\item \emph{Corollary~\ref{cor:singer} is essentially optimal.} A search-free
construction improving on $(1+o(1))n^2$ can gain at most a factor of two in
the worst case, so the value of any better construction lies in the structured
regime, not in the worst case.
\item \emph{The measured values are not representative.} Every entry in
Table~\ref{tab:zoo} lies below $2.25n$, and the graphs there are
vertex-transitive, small, or highly symmetric. Proposition~\ref{prop:magnitude}
says the typical graph on $n$ vertices sits a factor of order $n$ higher. The
gap between the two regimes, not the worst case, is what the exact values
measure.
\end{itemize}

\begin{remark}[Where a linear bound might still hold]\label{rem:trees}
The counting bound is driven by the $2^{\binom n2}$ graphs on $n$ vertices and
says nothing about sparse classes. For trees the count is only $n^{n-2}$, and
the same computation yields no non-linear lower bound at all; Babai and S\'os
prove $\nind(T)\le n^2$ for every tree by a greedy argument
\cite[Thm.~7.1]{babai-sos1985} and ask
\cite[Rem.~7.2]{babai-sos1985} whether $n^{1+o(1)}$ suffices, observing that
they have no non-linear lower bound. Our data are consistent with a linear
answer: $\nind(P_n)=n+1$ (Theorem~\ref{thm:path}), $\nind(K_{1,q})=2q=2n-2$
(Theorem~\ref{thm:bipartite}), and $\nind(D_{q,q})=5q$ for $2\le q\le6$
(Section~\ref{sec:doublestar}). So is Corollary~\ref{cor:grid}, which
exhibits an irregular family with $\nind=(1+o(1))n$. The double stars are the
family that pins the constant down from below, and they show it exceeds $2$.
\end{remark}

\begin{conjecture}\label{conj:trees}
There is an absolute constant $c$ with $\nind(T)\le cn$ for every tree $T$ on
$n$ vertices.
\end{conjecture}

We deliberately attach no numerical value to $c$. An earlier version of this
work proposed $c=2$; Section~\ref{sec:doublestar} refutes that, since the
double star $D_{5,5}$ has $\nind=25$ against $2n=24$. The measured values
force $c\ge15/7$, attained at $D_{6,6}$, and $c\ge5/2$ if
Conjecture~\ref{conj:doublestar} holds.

Proposition~\ref{prop:biparred} suggests a concrete line of attack, since
trees are bipartite. It suffices to label the two sides of $T$ in $\Z_M$ with
$M=O(n)$ so that the cross-differences separate edges from non-edges; the
whole question then becomes additive, with no group theory left in it. That
formulation is exact rather than heuristic: it is what produces the sharp
values $2q$ for stars and $6$ for $P_5$. It is also visibly not the whole
story, since $\nind(D_{3,3})=15$ and $\nind(D_{5,5})=25$ are odd and no host
of the form $\Z_2\times\Z_M$ can attain them, so a proof of
Conjecture~\ref{conj:trees} along these lines would give the right order of
growth but not the exact constant. Caterpillars, where one side of the
bipartition is a path and the other is a union of leaf sets, are the natural
first case.

\section{The algebraic route}\label{sec:algebraic}

The quotient machinery of \cite{p1} applies with no change to the engine and
one change to the certificate. So that this section can be read without
consulting \cite{p1}, we first recall the construction in the form we need.

\paragraph{The construction, recalled.} Fix a partition of $E(G)$ into $t$
classes $E_1,\dots,E_t$ and an orientation of every edge. The intention is
that all edges of class $j$ should be realised by a single group element
$g_j$, so that traversing an edge of class $j$ forwards adds $g_j$ and
traversing it backwards subtracts it. Fixing a root $r$ and setting
\[
  \phi(v)\;=\;\sum_{j=1}^{t}\sigma_j(P)\,g_j ,
\]
where $P$ is any path from $r$ to $v$ and $\sigma_j(P)\in\Z$ is the signed
number of times $P$ uses class $j$, is well defined precisely when the signed
class-counts around every cycle sum to zero. Collecting one such relation per
cycle in a cycle basis as the rows of the \emph{signed cycle--class matrix}
$A\in\Z^{c\times t}$, the most general group in which these relations hold is
the \emph{universal group}
\[
  \Gamma_{\mathrm{univ}}\;=\;\Z^{t}/\mathrm{rowlattice}(A),
\]
whose invariant factors are read off from the Smith normal form of $A$
\cite{stanley2016}. Writing $g_j$ for the image of the $j$th standard basis
vector, $\phi$ is the resulting labelling, and $S=\{\pm g_j\}$. Every further
candidate host is a quotient of $\Gamma_{\mathrm{univ}}$ by a sublattice --- a
\emph{fold} --- and these are enumerated by lattice methods
\cite{schrijver1986}. Two things are then true of $\phi$ for any choice of
partition and any fold, and they are all we use.

\begin{lemma}[No stretching]\label{lem:nostretch}
Let $f:V(G)\to\Gamma$ satisfy $f(v)-f(u)\in S$ for every edge $uv\in E(G)$.
Then $d_{\Cay(\Gamma,S)}(f(u),f(v))\le d_G(u,v)$ for all $u,v$.
\end{lemma}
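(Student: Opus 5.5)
The plan is a direct path-shortening argument: every path in $G$ is mapped by $f$ to a walk of the same length in $\Cay(\Gamma,S)$. Fix $u,v$. If $d_G(u,v)=\infty$ there is nothing to prove. Otherwise choose a shortest path
\[
u=x_0,x_1,\dots,x_k=v,\qquad k=d_G(u,v).
\]

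The key steps, in order, are as follows.

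\begin{enumerate}
\item For each $i$, the pair $x_{i-1}x_i$ is an edge of $G$. By hypothesis, $f(x_i)-f(x_{i-1})\in S$.
\item By the definition of a Cayley graph, this means $f(x_{i-1})$ is adjacent to, or equal to, $f(x_i)$ in $\Cay(\Gamma,S)$. Equality is impossible, since $0\notin S$.
\item Hence $f(x_0),f(x_1),\dots,f(x_k)$ is a walk of length $k$ from $f(u)$ to $f(v)$ in the host.
\item Any walk of length $k$ contains a path of length at most $k$. So the host distance satisfies $d_{\Cay(\Gamma,S)}(f(u),f(v))\le k=d_G(u,v)$.
\end{enumerate}

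The only points needing care are conventions. First, $S$ must be symmetric, so that host adjacency is symmetric and the orientation of each edge does not matter. This holds in every setting where the lemma is applied: $S=\DE$, or $S=\{\pm g_j\}$. Second, the case $u=v$ is trivial, with both distances equal to $0$.

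Injectivity of $f$ is not needed. Neither is inducedness: only the edge-to-generator condition is used. This is exactly why the lemma applies to the quotient labelling $\phi$ for every partition and every fold. By construction, $\phi(v)-\phi(u)=\pm g_j\in S$ for each edge of class $j$.

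There is no real obstacle here. The one step worth stating explicitly is that the edge differences lie in $S$ for $\phi$ after folding. This follows because a fold is a group homomorphism, so it sends $g_j$ to the image generator and preserves each edge difference $\pm g_j$.
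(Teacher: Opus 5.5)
Your proof is correct and is the paper's argument: a path from $u$ to $v$ in $G$ is sent by $f$ to a walk of the same length in $\Cay(\Gamma,S)$, because each consecutive difference lies in $S$, and this bounds the host distance. Your remarks that $S$ is symmetric, that $0\notin S$, and that neither injectivity nor inducedness is used are accurate side observations that the paper leaves implicit.
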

\begin{proof}
A path $u=w_0,w_1,\dots,w_d=v$ in $G$ maps to a walk
$f(w_0),\dots,f(w_d)$ in $\Cay(\Gamma,S)$ of the same length, since each
consecutive difference lies in $S$.
\end{proof}

Lemma~\ref{lem:nostretch} applies to $\phi$ because every edge difference is
some $\pm g_j\in S$ by construction. Consequently host distances can only be
too small, never too large, and both notions of correctness become one-sided
conditions.

\begin{proposition}\label{prop:shortcut}
Let $f$ be as in Lemma~\ref{lem:nostretch}. Then
\[
  \text{$f$ isometric}\iff\text{no shortcut at any distance},\qquad
  \text{$f$ induced}\iff\text{no shortcut at distance }1,
\]
where a \emph{shortcut} is a pair $u,v$ with
$d_{\Cay(\Gamma,S)}(f(u),f(v))<d_G(u,v)$, and a shortcut at distance $1$ is
one with $d_{\Cay(\Gamma,S)}(f(u),f(v))=1<d_G(u,v)$.
\end{proposition}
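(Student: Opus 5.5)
Both equivalences follow from Lemma~\ref{lem:nostretch}, which gives the one-sided inequality $d_{\Cay(\Gamma,S)}(f(u),f(v))\le d_G(u,v)$ for every pair. Correctness therefore reduces to ruling out strict inequality, and the plan is to check which strict inequalities matter in each case.

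\emph{Isometric case.} By definition $f$ is isometric if and only if $d_{\Cay}(f(u),f(v))=d_G(u,v)$ for all $u,v$. Given the upper bound, this holds if and only if the inequality is never strict, that is, if and only if there is no shortcut at any distance.

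\emph{Induced case, forward direction.} Suppose $f$ is induced. We rule out a shortcut at distance $1$. Such a pair would have $d_G(u,v)\ge2$, so $u\ne v$ and $uv\notin E(G)$. It would also have $d_{\Cay}(f(u),f(v))=1$, so $f(v)-f(u)\in S$. Then $f(u)$ and $f(v)$ are adjacent in the host although $uv$ is a non-edge, which contradicts inducedness.

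\emph{Induced case, converse.} Suppose there is no shortcut at distance $1$. We first record that $f$ is injective. If $f$ is not assumed injective, this follows from the no-shortcut condition only with care: a pair with $f(u)=f(v)$ has host distance $0$, which is a shortcut but not one \emph{at distance $1$}. I would therefore take injectivity as part of the standing hypothesis that $f$ is an embedding, as in Definition~\ref{def:eta}; this matches how the paper uses $\phi$ after checking that the fold is injective. Now take $u\ne v$.
\begin{itemize}
\item If $uv\in E(G)$, then $f(v)-f(u)\in S$ by the hypothesis of Lemma~\ref{lem:nostretch}, so the images are adjacent.
\item If $uv\notin E(G)$, then $d_G(u,v)\ge2$ (possibly $\infty$ if $G$ is disconnected). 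Injectivity gives host distance at least $1$. Host distance exactly $1$ would be a shortcut at distance $1$, which is excluded. Hence the host distance is at least $2$, and the images are non-adjacent.
\end{itemize}
So adjacency is preserved and reflected exactly, and $f$ is induced.

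The only delicate point is this injectivity and degenerate-distance bookkeeping: distance $0$ collisions and infinite distances in disconnected $G$. I expect it to be handled by stating that $f$ is injective, which is part of the meaning of an embedding here. Everything else is a direct unwinding of definitions combined with Lemma~\ref{lem:nostretch}.
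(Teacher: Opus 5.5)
Your proof is correct and follows the paper's own argument: Lemma~\ref{lem:nostretch} makes isometry a one-sided condition, and inducedness reduces to no non-adjacent pair having adjacent images. Your injectivity caveat is well founded and is left implicit in the paper's one-line proof. A non-injective $f$ that identifies two non-adjacent vertices (e.g.\ the endpoints of $P_3$ both sent to $0$ in $\Cay(\Z_2,\{1\})$, with the middle vertex sent to $1$) has no shortcut at distance $1$ yet is not an induced embedding, so taking injectivity as a standing hypothesis, as you do, is exactly the right repair.
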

\begin{proof}
By Lemma~\ref{lem:nostretch} the host distance never exceeds the graph
distance, so isometry fails exactly when some pair is strictly closer in the
host. For the second equivalence, $f$ is induced exactly when no non-adjacent
pair has adjacent images, that is, exactly when no pair with $d_G\ge2$ has
host distance $1$.
\end{proof}

Everything in the recalled construction --- the oriented partition, the signed
cycle--class matrix, the Smith normal form, $\Gamma_{\mathrm{univ}}$ and the
sublattice folding --- therefore carries over to the induced problem verbatim.
What changes is only the verification step, and the change is the practical
content of this paper: see Section~\ref{sec:complexity}.

\begin{remark}[Folds that isometry rejects]
Since the induced certificate is weaker, folds creating a shortcut at distance
$\ge2$ become admissible. Running the same fold search under the two
certificates gives $6$ against $8$ for $P_5$ and $9$ against $11$ for the
bull, reproducing $\nind$ and $\nu$ respectively in every case tested.
\end{remark}

\section{Complexity}\label{sec:complexity}

\subsection{Space: the host}\label{sec:space}

The host order is the memory footprint of every downstream operation, and the
three schemes differ sharply.

\begin{center}\small
\begin{tabular}{llll}
\toprule
& isometric $\nu$ & induced $\nind$ & completion \\
\midrule
universal bound & $2^{n-1}$ \cite[Cor.~1]{p1} & $(1+o(1))n^2$ \cite{babai-sos1985} & $n$ \cite{comp}\\
tight worst case & odd cycles \cite[Thm.~5]{p2} & $\Theta(n^2)$ (Prop.~\ref{prop:magnitude}) & --- \\
paths & $2(n-1)$ \cite[Cor.~2]{p2} & $n+1$ (Thm.~\ref{thm:path}) & $n$\\
stars $K_{1,q}$ & $2q$ \cite{p2} & $2q$ (Thm.~\ref{thm:bipartite}; \cite{babai-sos1985}) & $n$\\
floor & $\max(n,2\operatorname{diam})$ & $\max(n,2\max_v\alpha(G[N(v)]))$ & $n$\\
host degree & $\le2t$ & $\le2t$, or $2m$ via Sidon & $|S|$\\
\bottomrule
\end{tabular}
\end{center}

The induced scheme is the only one of the three with a \emph{polynomial}
universal bound that requires no search: Corollary~\ref{cor:singer} is a closed
form, and by Proposition~\ref{prop:magnitude} it cannot be improved by more
than a constant factor. Both statements are classical
\cite{babai-sos1985}. The isometric bound is exponential and tight; the
completion host is smallest but is paid for in perturbation.

\subsection{Time: certification dominates, and only for isometry}

Paper \cite{p1} verifies a candidate by comparing all $\binom n2$ distances
against a breadth-first search of the candidate Cayley graph, and
\cite[Rem.~9]{p1} identifies this as the dominant cost, exponential in the
worst case since the binary terminal has order $2^{k}$ with $k$ up to $n-1$.
By Proposition~\ref{prop:shortcut} the induced certificate is a lookup on the
non-edges:
\[
  \text{isometric certificate } \Theta\bigl(N|S| + n^2\bigr),
  \qquad
  \text{induced certificate } \Theta\bigl(n^2\bigr).
\]
The host order $N$ does not appear in the induced cost. We do not claim this
as a complexity separation --- once stated, it is immediate that verifying
adjacency needs only the labelling whereas verifying isometry needs the host
metric. The point is quantitative and concerns the pipeline of \cite{p1}: the
term removed is the only one that can be exponential, and it is the term that
dominates in practice.

\begin{table}[t]
\centering\small
\caption{Measured certificate time, graph fixed at $n=12$, host order varying.
The induced certificate never traverses the host and is flat in $N$. The
isometric column measures \emph{our present implementation}, which runs a
breadth-first search from every host vertex and therefore grows like $N^2$;
see the caveat below. The ratio column compares the two implementations, not
the two algorithms.}
\label{tab:cert}
\begin{tabular}{rrrr}
\toprule
host order $N$ & induced (ms) & isometric (ms) & ratio\\
\midrule
$32$ & $0.032$ & $0.63$ & $20\times$\\
$128$ & $0.028$ & $10.71$ & $389\times$\\
$512$ & $0.031$ & $175.5$ & $5\,610\times$\\
$2048$ & $0.033$ & $2\,974$ & $90\,258\times$\\
$8192$ & $0.030$ & $55\,618$ & $1\,854\,363\times$\\
\bottomrule
\end{tabular}
\end{table}

Table~\ref{tab:cert} and Figure~\ref{fig:complexity}A measure the effect, and
one caveat must be attached to them. The host is a Cayley graph, hence
vertex-transitive, so its distance function is translation-invariant and a
\emph{single} breadth-first search from the identity determines it; that is
what the $\Theta(N|S|+\binom n2)$ figure above assumes. Our implementation
does not exploit this and searches from every vertex, so the measured isometric
times grow like $N^2$: quadrupling $N$ multiplies them by roughly $16$
throughout the table, not by $4$. The ratios in the last column are therefore
the gap between an optimised induced certificate and an unoptimised isometric
one, and they overstate the algorithmic gap by a factor of order $N$. The
qualitative point survives unchanged and is the one we rely on --- verifying
adjacency needs the labelling alone, verifying isometry needs the host metric,
and the term removed is the only one that depends on $N$ --- but the
$1\,854\,363\times$ figure is an artefact of the implementation and should not
be read as an algorithmic separation.

\subsection{Time: the search}

Certification is not the whole cost. Each scheme also searches:

\begin{itemize}[leftmargin=1.4em,itemsep=2pt]
\item \emph{isometric}: over oriented partitions, then over sublattice folds,
each candidate certified as above \cite[Thm.~11]{p1};
\item \emph{induced}: over labellings (or, via
Proposition~\ref{prop:shortcut}, over the same partitions and folds) with the
cheap certificate --- \emph{or no search at all} if the $O(n^2)$ Sidon
construction is accepted;
\item \emph{completion}: over labellings, with the per-labelling optimum
computed exactly in polynomial time by a keep/drop rule subject to a
generation constraint \cite{comp}; the problem is NP-hard for a \emph{fixed
cyclic host} \cite{comp}, while the complexity of $\gamma$ itself, which
minimises over all abelian hosts, is open.
\end{itemize}

The exact optimisation is hard or open in all three cases: for completion,
NP-hard with the host fixed to be cyclic and open in general \cite{comp};
conjectured hard for $\nu$ \cite[Rem.~9]{p1}; and open for $\nind$
(Section~\ref{sec:open}). The three invariants are thus in the same position,
which the table of Section~\ref{sec:space} should be read as recording. What distinguishes the induced scheme is
that a \emph{good enough} answer is free: by Corollary~\ref{cor:singer} no
search at all is needed to reach $(1+o(1))n^2$, and by
Proposition~\ref{prop:magnitude} that is within a factor of two of optimal in
the worst case. All the difficulty is therefore concentrated in the structured
regime, where the optimum is far below $n^2$ and the search is what finds it.

\subsection{What the cheap certificate does not buy}\label{sec:notractable}

It would be easy to read Section~\ref{sec:complexity} as saying that the
induced scheme is computationally easy. It is not, and the distinction matters
enough to state separately.

Three costs must be kept apart. \emph{Verifying} a proposed labelling is
$\Theta(n^2)$ and independent of the host, by
Proposition~\ref{prop:shortcut}; this is the quantity measured in
Table~\ref{tab:cert} and in the last column of Table~\ref{tab:zoo}, and it is
genuinely negligible. \emph{Constructing} a host with no optimality claim is
also cheap: Corollary~\ref{cor:singer} is a closed form requiring no search at
all. \emph{Minimising} the host order is the expensive one, and nothing in
this paper makes it cheap.

The reason the relaxation does not help with minimisation is that it removes
the exponential cost \emph{per candidate}, not the exponential \emph{number of
candidates}. In the quotient route the candidates are oriented partitions of
$E(G)$ together with sublattice folds of $\Gamma_{\mathrm{univ}}$, and their
number grows superexponentially in $|E(G)|$ regardless of how cheaply each one
is checked. In the direct route it is the placements of $n$ vertices into a
group of order $N$, of which there are $\binom Nn n!$ before symmetry
reduction. Proving that no host of order $N$ exists --- which is what an exact
value requires --- means exhausting one of these spaces, and our
implementation quotients only by translation, not by $\Aut(\Gamma)$ or
$\Aut(G)$.

The consequences are visible in our own measurements. The certificate for the
karate club graph would cost microseconds; establishing $\nind$ for it
exhausted $10\,298$~s without reaching either an embedding of order at most
$2n$ or a proof that none exists. The two numbers are not in tension, because
they measure different things. Practically, for a graph of a few hundred or a
few thousand vertices one should not attempt the exact value at all: apply
Corollary~\ref{cor:singer}, which yields a host of order $(1+o(1))n^2$
immediately, and on which the group Fourier transform still costs
$O(N\log N)$, the spectrum being obtainable as the transform of the indicator
function of $S$. The exact invariant $\nind$ is an object of study for small
graphs and a benchmark for heuristics; at those sizes the exact value is not
worth its computational cost, and the closed-form construction should be used
instead.

We record this explicitly, since it is the use we anticipate. The certified
values of Table~\ref{tab:zoo} and of Section~\ref{sec:random}, together with
the exhaustive values for the connected graphs on at most six vertices, are
deposited in \cite{zenodo} as a benchmark set: for any approximate
or anytime method returning an upper bound $\widetilde\eta(G)\ge\nind(G)$, the
ratio $\widetilde\eta/\nind$ can be measured against a known optimum on every
graph in that set. Establishing such a ratio on small graphs is the only
evidence available that a heuristic is trustworthy at the sizes where $\nind$
itself cannot be computed.

\begin{remark}[Host degree]
Two hosts of the same order need not be equally economical downstream, since
the degree $|S|$ governs the cost of building the spectrum of the host. Sidon
hosts have degree $2m$ and are dense; quotient hosts have degree at most $2t$.
For $P_8$ we measured degree $2$ at $N=9$ against degree $14$ at $N=57$, so
the quotient route wins on both order and sparsity.
\end{remark}

\section{Worked examples}\label{sec:examples}

Table~\ref{tab:zoo} gives $\nind$ for every graph worked as an example in
\cite{p1,p2}, together with a broader comparison set. The values are certified
optima, not best-found upper bounds. For each order $N$ in increasing order
from the floor of Corollary~\ref{cor:floor-sharp}, the solver enumerates
\emph{every} abelian group of order $N$ up to isomorphism and performs an
exhaustive backtracking placement in each, pruning on the condition of
Proposition~\ref{prop:2col}; exhausting order $N$ without success therefore
certifies $\nind(G)>N$, and the first success is the optimum. Every returned
labelling is then re-verified independently by a full non-edge check. By
Lemma~\ref{lem:gen} the search may be restricted to hosts generated by their
connection set, which is what makes the enumeration finite in practice. The
solver, the independent re-verification script and every computed value
reported below are deposited in \cite{zenodo}.

\subsection{The double star family}\label{sec:doublestar}

The entry of Table~\ref{tab:zoo} furthest above its floor is the double star
$D_{3,3}$, at $\nind=15$ against a floor of $8$. It is not an anomaly but the
first member of a family, and that family is the one place where we can
currently settle a question rather than pose one.

Write $D_{q,q}$ for the tree obtained from a single edge by attaching $q$
leaves to each endpoint, so $n=2q+2$ and $m=2q+1$. The open neighbourhood of a
centre consists of its $q$ leaves together with the other centre, and these
$q+1$ vertices are pairwise non-adjacent, so $\alpha^*=q+1$ and
Theorem~\ref{thm:floor} gives only $\max(n,2(q+1))=n$. The truth is far
larger.

\begin{table}[h]
\centering\small
\caption{The double star family. Each value is exact: every abelian group of
every order strictly below $\nind$ was searched to completion with no
timeouts, and every returned embedding was re-verified against
Definition~\ref{def:eta}. The last column lists \emph{every} abelian group of
order $\nind$, in invariant-factor form; in each case all of them are hosts.}
\label{tab:doublestar}
\begin{tabular}{@{}crrrrrl@{}}
\toprule
$q$ & $n$ & $m$ & floor & $\nind$ & $\nind-\text{floor}$ & optimal hosts\\
\midrule
$2$ & $6$  & $5$  & $6$  & $10$ & $4$  & $\Z_{10}$ (the only group of order $10$)\\
$3$ & $8$  & $7$  & $8$  & $15$ & $7$  & $\Z_{15}$ (the only group of order $15$)\\
$4$ & $10$ & $9$  & $10$ & $20$ & $10$ & $\Z_{20}$ and $\Z_2\times\Z_{10}$ (both)\\
$5$ & $12$ & $11$ & $12$ & $25$ & $13$ & $\Z_{25}$ and $\Z_5\times\Z_5$ (both)\\
$6$ & $14$ & $13$ & $14$ & $30$ & $16$ & $\Z_{30}$ (the only group of order $30$)\\
\bottomrule
\end{tabular}
\end{table}

\begin{conjecture}\label{conj:doublestar}
$\nind(D_{q,q})=5q$ for all $q\ge2$.
\end{conjecture}

Five values are five values, and we state this as a conjecture rather than a
theorem. But three consequences follow from the \emph{measured} entries of
Table~\ref{tab:doublestar} alone, independently of whether the conjecture
holds.

\paragraph{The constant of Conjecture~\ref{conj:trees} is not $2$.} At $q=5$
we have $\nind(D_{5,5})=25$ against $2n=24$, and at $q=6$, $30$ against $28$.
Both values are exact, so no constant $c\le2$ can bound $\nind(T)/n$ over all
trees. The smallest constant consistent with our data is $c\ge15/7$; if
Conjecture~\ref{conj:doublestar} holds then $\nind(D_{q,q})/n=5q/(2q+2)$
increases to $5/2$ and forces $c\ge5/2$. This settles, negatively, the only
numerical claim we had attached to Conjecture~\ref{conj:trees}; the
conjecture itself --- that some absolute constant exists --- is untouched, and
Problem~\ref{prob:trees} is unchanged.

\paragraph{An explicit family above the census maximum.} The largest ratio in
Table~\ref{tab:zoo} is $2.25$, attained by the single graph
$\mathrm{Frucht}$. Under Conjecture~\ref{conj:doublestar} the double stars
exceed it for every $q\ge10$, and tend to $5/2$. What is now an isolated
extreme row would then be a point on an explicit infinite family.

\paragraph{An extremal test for the floor.} The gap
$\nind(D_{q,q})-\max(n,2\alpha^*)=3q-2$ grows without bound. So no lower bound
depending only on the local independence numbers of neighbourhoods can be
tight on this family, which makes it the natural test case for
Problem~\ref{prob:floor}.

\paragraph{The group type is unconstrained; only the order is.} The last
column of Table~\ref{tab:doublestar} records something we did not expect. At
each $q$ we tested \emph{every} abelian group of order $5q$, and every one of
them is a host: both $\Z_{20}$ and $\Z_2\times\Z_{10}$ at $q=4$, and both
$\Z_{25}$ and $\Z_5\times\Z_5$ at $q=5$ (at $q=3$ and $q=6$ there is only one
group of the relevant order). So on this family the obstruction is entirely
one of \emph{order}: once $|\Gamma|\ge5q$ the group structure imposes no
further condition, and $\nind$ is determined by a counting phenomenon rather
than by which abelian group is available.

This is worth contrasting with the rest of Table~\ref{tab:zoo}, where the
group type does matter --- the Petersen graph needs $\Z_4\times\Z_4$ and no
cyclic host of order $16$ exists for it. The double stars are the opposite
case, and any proof of Conjecture~\ref{conj:doublestar} should therefore be a
counting argument in an arbitrary abelian group of order $5q$, not a
construction inside a particular one.

We also record what \emph{cannot} be read off the column, since it is the
natural first guess. Every host contains an element of order $5$, but for
$q=3,4,6$ that is forced and not observed: $25\nmid5q$, so the $5$-Sylow
subgroup of any abelian group of order $5q$ is $\Z_5$ and splits off. The one
order where the question has content is $q=5$, and there $\Z_{25}$ --- which
has no $\Z_5$ direct factor --- is a host. The data therefore do not support
writing the host as $\Z_5\times A$ with $|A|=q$, and we make no such claim.

\begin{remark}[Symmetry reduction, and the ceiling of
Section~\ref{sec:random}]\label{rem:twins}
The values for $q\ge4$ are out of reach of the search exactly as described in
Section~\ref{sec:notractable}. The $q$ leaves at a centre have identical open
neighbourhoods, so any permutation of them is an automorphism of $D_{q,q}$ and
the search explores $(q!)^2$ relabellings of every embedding. Requiring the
labels within each class of vertices of equal open neighbourhood to increase
is sound --- any embedding may be composed with such an automorphism to put it
in that form, and the composite is still an induced embedding --- and removes
the factor. At $q=5$ and $N=24$ it took two searches that had not terminated
within $900$ seconds down to $6.4$ and $2.8$ seconds.

We draw the methodological consequence rather than leaving it implicit. In
Section~\ref{sec:random} we report a ceiling near $n=12$ and attribute it to
the method. Table~\ref{tab:doublestar} shows that attribution to be too
generous to ourselves: a single quotient by one family of automorphisms moved
$D_{6,6}$ at $n=14$ from unreachable to routine. The ceiling is a property of
what our implementation quotients by, not of exhaustive search, and quotienting
further --- by $\Aut(G)$ in general, and by $\Aut(\Gamma)$ on the host side ---
should move it again.
\end{remark}

\begin{figure}[t]
\centering
\includegraphics[width=\linewidth]{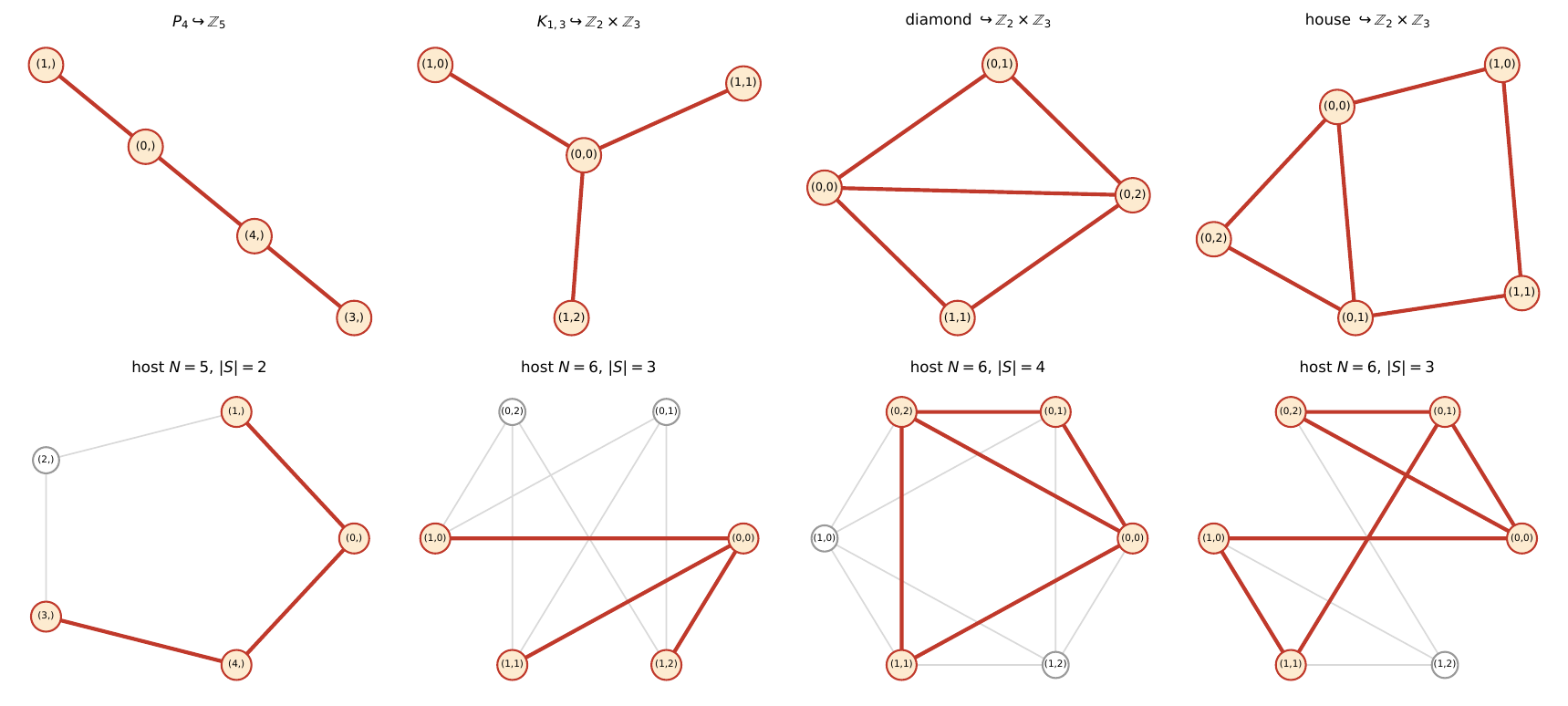}
\caption{Four induced embeddings. Top: the graph, each vertex labelled by its
image in the host group. Bottom: the host Cayley graph, with the image
highlighted and the image of $E(G)$ in red; grey edges are host edges not used
by $G$. No non-edge of $G$ is a host edge, which is the condition of
Proposition~\ref{prop:2col}.}
\label{fig:embed}
\end{figure}

\begin{table}[t]
\centering\small
\caption{Induced host order $\nind$, the floor of Theorem~\ref{thm:floor}, an
optimal host group in \emph{invariant-factor} form
$\Z_{d_1}\times\cdots\times\Z_{d_k}$ with $d_1\mid\cdots\mid d_k$, so that a
single factor means the host is cyclic (the group need not be unique --- the
bull attains $9$ in both $\Z_9$ and $\Z_3\times\Z_3$), the isometric $\nu$
from \cite{p1,p2} where known, and the measured induced certificate time.
Sorted by $\nind/n$.}
\label{tab:zoo}
\begin{tabular}{lrrrrrllr}
\toprule
graph & $n$ & $m$ & floor & $\nind$ & $\nind/n$ & optimal host & $\nu$ & cert.\ (ms)\\
\midrule
$K_3$ & 3 & 3 & 3 & 3 & 1.00 & $\Z_{3}$ & $=3$ & 0.100 \\
$K_{3,3}$ & 6 & 9 & 6 & 6 & 1.00 & $\Z_{6}$ & $=6$ & 0.019 \\
$C_5$ & 5 & 5 & 5 & 5 & 1.00 & $\Z_{5}$ & $=5$ & 0.012 \\
prism $Y_3$ & 6 & 9 & 6 & 6 & 1.00 & $\Z_{6}$ & $=6$ & 0.020 \\
$Q_3$ & 8 & 12 & 8 & 8 & 1.00 & $\Z_{2}\times\Z_{4}$ & $=8$ & 0.033 \\
$K_{2,3}$ & 5 & 6 & 6 & 6 & 1.20 & $\Z_{6}$ & $=6$ & 0.014 \\
house & 5 & 6 & 5 & 6 & 1.20 & $\Z_{6}$ & --- & 0.014 \\
$P_4$ & 4 & 3 & 4 & 5 & 1.25 & $\Z_{5}$ & $=6$ & 0.009 \\
Gr\"otzsch & 11 & 20 & 11 & 16 & 1.45 & $\Z_{4}\times\Z_{4}$ & --- & 0.062 \\
diamond & 4 & 5 & 4 & 6 & 1.50 & $\Z_{6}$ & $=6$ & 0.011 \\
$K_{1,3}$ & 4 & 3 & 6 & 6 & 1.50 & $\Z_{6}$ & $=6$ & 0.010 \\
paw & 4 & 4 & 4 & 6 & 1.50 & $\Z_{6}$ & --- & 0.020 \\
Petersen & 10 & 15 & 10 & 16 & 1.60 & $\Z_{4}\times\Z_{4}$ & $=16$ & 0.051 \\
wheel $W_6$ & 6 & 10 & 6 & 10 & 1.67 & $\Z_{10}$ & --- & 0.023 \\
lollipop $L_{4,2}$ & 6 & 8 & 6 & 10 & 1.67 & $\Z_{10}$ & --- & 0.021 \\
icosahedron & 12 & 30 & 12 & 20 & 1.67 & $\Z_{2}\times\Z_{10}$ & --- & 0.079 \\
friendship $F_3$ & 7 & 9 & 7 & 12 & 1.71 & $\Z_{12}$ & --- & 0.026 \\
bull & 5 & 5 & 5 & 9 & 1.80 & $\Z_{9}$ & $=11$ & 0.015 \\
double star $D_{3,3}$ & 8 & 7 & 8 & 15 & 1.88 & $\Z_{15}$ & --- & 0.034 \\
net & 6 & 6 & 6 & 12 & 2.00 & $\Z_{12}$ & --- & 0.021 \\
Chv\'atal & 12 & 24 & 12 & 24 & 2.00 & $\Z_{24}$ & --- & 0.073 \\
Frucht & 12 & 18 & 12 & 27 & 2.25 & $\Z_{3}\times\Z_{9}$ & --- & 0.071 \\
\bottomrule
\end{tabular}
\end{table}

\begin{figure}[t]
\centering
\includegraphics[width=\linewidth]{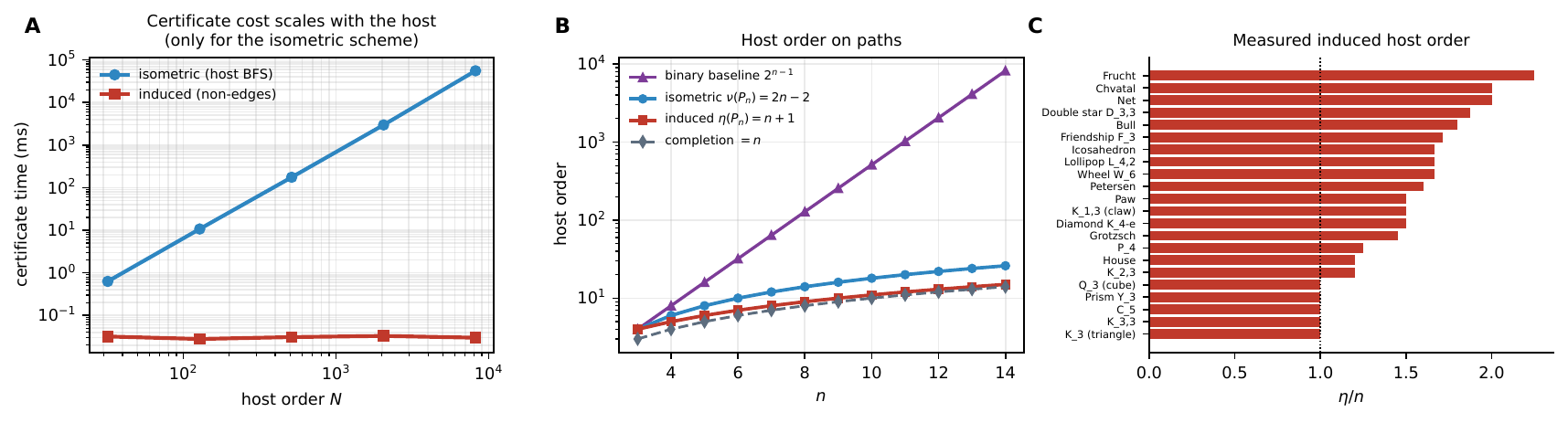}
\caption{\textbf{A} Measured certificate cost against host order
(Table~\ref{tab:cert}). \textbf{B} Host order of the four constructions on
paths. \textbf{C} Measured $\nind/n$ across the examples of
Table~\ref{tab:zoo}.}
\label{fig:complexity}
\end{figure}

\paragraph{The cyclic restriction is cheap on most graphs and expensive on a
few.} Read in invariant-factor form, Table~\ref{tab:zoo} says that
\emph{seventeen} of the $22$ optimal hosts are cyclic. Products of cyclic
groups of coprime orders are cyclic by the Chinese remainder theorem, so
$\Z_2\times\Z_3\cong\Z_6$, $\Z_2\times\Z_5\cong\Z_{10}$,
$\Z_3\times\Z_4\cong\Z_{12}$, $\Z_3\times\Z_5\cong\Z_{15}$ and
$\Z_3\times\Z_8\cong\Z_{24}$ all denote cyclic hosts. Exactly five graphs
have a listed optimal host of rank two --- $Q_3$ ($\Z_2\times\Z_4$),
Gr\"otzsch and Petersen ($\Z_4\times\Z_4$), the icosahedron
($\Z_2\times\Z_{10}$) and Frucht ($\Z_3\times\Z_9$) --- and none has rank
three. Writing the column in primary rather than invariant-factor form
obscures this, which is why we give it in invariant factors.

The consequence for the comparison with $\mathrm{rep}$ is therefore a
dichotomy rather than a uniform verdict. Write $\nind_{\mathrm{cyc}}$ for the
minimum over cyclic hosts alone. On the seventeen graphs with a cyclic optimal
host, $\nind_{\mathrm{cyc}}=\nind$ and the restriction costs nothing. On the
remaining five it can cost a great deal: restricting our own solver to $\Z_N$
returns $36$ for the Petersen graph against the true $16$, and $59$ for Frucht
against $27$ --- factors $2.25$ and $2.19$. For $Q_3$, Gr\"otzsch and the
icosahedron the table exhibits a rank-two optimal host but we have not computed
$\nind_{\mathrm{cyc}}$, so we do not claim a gap there.

The classical per-graph invariant in this area, the representation number
$\mathrm{rep}(G)$ of \cite{erdos-evans1989,evans-isaak-narayan2000}, imposes
the cyclic restriction and one further one, fixing the connection set to be the
units of $\Z_N$. What our measurements show is not that the restriction is
generally harmful --- on this sample it usually is not --- but that its cost is
\emph{concentrated}: harmless on most graphs, and more than a factor two on an
identifiable minority. Characterising that minority is the substantive question
this leaves open, and Problem~\ref{prob:cyclic} states it. All five graphs we
found have a host with a repeated prime among its invariant factors, which is
the natural first guess. This is also the reason the solver of this section
enumerates abelian types rather than $\Z_N$: the enumeration costs little and
is what makes the five cases visible at all.

\begin{corollary}[A free isometric value]\label{cor:nu-petersen}
$\nu(\mathrm{Petersen})=16$.
\end{corollary}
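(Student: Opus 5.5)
We will obtain $\nu(\mathrm{Petersen})=16$ by squeezing it between the certified value of $\nind$ and the isometric value recorded in \cite{p1,p2}. The chain we use is
\[
  16=\nind(\mathrm{Petersen})\le\nu(\mathrm{Petersen})\le16 .
\]
The point of the corollary is that the lower half of this chain is "free": the isometric lower bound available in \cite{p2} is only $\max(n,2\operatorname{diam})=\max(10,4)=10$. So the value $16$ for $\nu$ can only be certified through the induced computation.

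\emph{Lower bound.} Every isometric embedding preserves distance $1$ and distance $\ge2$ separately, so it is an induced embedding. Hence $\nind(G)\le\nu(G)$ for every $G$, as noted after Theorem~A. By Table~\ref{tab:zoo}, $\nind(\mathrm{Petersen})=16$. This value is a certified optimum: the solver of Section~\ref{sec:examples} exhausted every abelian group of every order $10\le N\le15$, using the $2$-colouring criterion of Proposition~\ref{prop:2col}, and found no induced embedding. Therefore $\nu(\mathrm{Petersen})\ge16$.

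\emph{Upper bound.} We take the isometric embedding of order $16$ from \cite{p1,p2}, recorded as $\nu=16$ in the table; more precisely we use its upper-bound half, an explicit isometric embedding into a Cayley graph of $\Z_4\times\Z_4$. If we wanted this to be self-contained, we would exhibit that labelling and check isometry by a single breadth-first search from $0$ in the host, which suffices by translation invariance. The check consists of confirming that the $15$ edges map to host distance $1$ and the $30$ non-adjacent pairs map to host distance exactly $2$, since the Petersen graph has diameter $2$.

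By Lemma~\ref{lem:nostretch}, if the embedding comes from the quotient construction, no distance is stretched. Isometry then reduces to the absence of shortcuts, and at diameter $2$ the only possible shortcut is one at distance $1$. So for the Petersen graph any induced embedding of this type is automatically isometric. This observation can replace the BFS check: we take the optimal induced $\Z_4\times\Z_4$ embedding directly, verify that every edge difference lies in $S$, and conclude by Proposition~\ref{prop:shortcut}.

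The main obstacle is the lower bound's reliance on computation: it is exactly as strong as the exhaustive certification at orders $10$–$15$. No structural proof is offered here, so the plan must rest on that certified search, together with its independent re-verification.
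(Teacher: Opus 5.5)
Your lower bound is the paper's argument: $\nind\le\nu$ together with the certified value $\nind(\mathrm{Petersen})=16$. You diverge on the upper bound.

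\emph{The first route needs correcting.} The paper simply cites the Clebsch host of \cite[Ex.~2]{p1}, which is a Cayley graph of $\Z_2^4$. You attribute to \cite{p1,p2} an isometric embedding into $\Z_4\times\Z_4$, and that is a misattribution. The paper stresses that its optimal induced host $\Z_4\times\Z_4$ is \emph{not} isomorphic to $\Z_2^4$. That sentence should be dropped.

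\emph{The second route is correct and genuinely different.} Every induced embedding has all edge differences in $S$ by definition, so Lemma~\ref{lem:nostretch} applies to it with no appeal to the quotient construction. Your hedge ``if the embedding comes from the quotient construction'' is therefore unnecessary. When $\operatorname{diam}(G)\le2$, the only possible shortcut is at distance $1$, which inducedness forbids. Hence every induced embedding of a graph of diameter at most $2$ is isometric, and $\nu(G)=\nind(G)$ for all such graphs.

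\emph{What each route buys.} Yours makes the corollary free in both directions: the solver's own $\Z_4\times\Z_4$ labelling is itself an optimal isometric embedding, so no external construction is needed. It also explains why $\nind=\nu$ on every diameter-$\le2$ row of Table~\ref{tab:zoo} where $\nu$ is listed. It would fill in the missing $\nu$ entries for the house, paw, wheel $W_6$, friendship $F_3$, Gr\"otzsch and Chv\'atal graphs as well. The paper's route is shorter, but it rests on \cite{p1} and exhibits only the $\Z_2^4$ host.
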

\begin{proof}
$\nind\le\nu$ always, and $\nind(\mathrm{Petersen})=16$ is certified above,
so $\nu\ge16$; \cite[Ex.~2]{p1} exhibits the Clebsch host of order $16$, so
$\nu\le16$.
\end{proof}

\paragraph{Induced rarely beats isometric on structured graphs.} Where $\nu$
is known, $\nind=\nu$ in nine of ten cases; the only strict gain is
$\nind(P_4)=5<6=\nu(P_4)$, an instance of Theorem~\ref{thm:path} against
$\nu(P_k)=2(k-1)$. Notably $\nind(\text{Petersen})=16$ coincides with the
order of the Clebsch host of \cite[Ex.~2]{p1}; that the Petersen graph is an
induced subgraph of the Clebsch graph, a Cayley graph of $\Z_2^4$, is
classical, so the content here is the matching lower bound and the fact that
the optimal group we find, $\Z_4\times\Z_4$, is not isomorphic to $\Z_2^4$.
The reading is that
most of these graphs are vertex-transitive or nearly so, and for them the
isometric host already sits near the injectivity floor, leaving the relaxation
nothing to recover. The relaxation pays where the isometric order is driven up
by the diameter term of \cite[Thm.~2]{p2} --- paths being extreme, with
$\nu=2(n-1)$ growing twice as fast as $\nind=n+1$. Cartesian products divide
along the same line. Products of Cayley graphs are Cayley graphs, so the cube
$Q_3=K_2^{\square3}$ and the prism $Y_3=K_3\square K_2$ sit at $\nind=n$ in
Table~\ref{tab:zoo}, and tori $C_{m}\square C_{m'}$ do too; but path-grids are
irregular, hence strictly above $n$, and Corollary~\ref{cor:grid} places them
just above it, at $n+2\sqrt n+1$ for the square grid. Products are thus a
family where the relaxation is cheap for a structural reason:
Theorem~\ref{thm:product} multiplies hosts, and the factors are already
near-optimal.

\paragraph{Where the cost concentrates.} The largest ratios are Frucht
($2.25$), Chv\'atal ($2.00$), the net ($2.00$) and the double star $D_{3,3}$
($1.88$); by Section~\ref{sec:doublestar} the last of these is the smallest
member of a family whose ratio increases past all of them. Across the $112$ connected graphs on six vertices --- the exhaustive
range of our $\nind$ census, deposited as \cite{zenodo} --- the cost peaks at
intermediate density: mean $\nind/n$ rises from $1.61$ at $m=5$ to $1.80$ at
$m=8$ (density $0.53$) and falls to $1.00$ at $m=15$, where the graph is
complete and hence Cayley. Sparse graphs have few edge-differences to
separate and dense graphs are close to Cayley graphs; the hard case has $\DE$
and $\DN$ both large. This is the finite-$n$ shadow of
Proposition~\ref{prop:magnitude}, whose counting bound is also driven by the
density-$\tfrac12$ regime.

\begin{remark}[The small-$n$ regime is not the general one]
Every value in Table~\ref{tab:zoo} lies below $2.25n$. It would be natural to
read this as evidence for a linear bound, and that reading is wrong: by
Proposition~\ref{prop:magnitude} almost all graphs on $n$ vertices require
$\nind\ge(\tfrac12-o(1))n^2$. The sample here is small and biased towards
vertex-transitive and highly structured graphs, which are exactly the graphs
on which $\nind$ is smallest. What the table measures is the size of the gap
between the structured and the generic regime, not the growth rate of $\nind$.
\end{remark}

\subsection{Measured growth on random graphs}\label{sec:random}

Proposition~\ref{prop:magnitude} is asymptotic, and Table~\ref{tab:zoo}
consists of small, highly structured graphs. To see whether the predicted
growth is visible at all at computationally accessible sizes, we computed
$\nind$ exactly for samples of $G(n,\tfrac12)$ by the procedure of
Section~\ref{sec:examples}: every abelian group of every order from the floor
upward, so that exhausting an order certifies a lower bound.

\begin{center}
\begin{tabular}{@{}ccc@{}}
\toprule
$n$ & exact values of $\nind/n$ & mean\\
\midrule
$6$  & $1.33,\ 1.50,\ 1.67$ & $1.50$\\
$8$  & $1.75,\ 1.88,\ 2.12$ & $1.92$\\
$10$ & $2.30$               & $2.30$\\
\bottomrule
\end{tabular}
\end{center}

The mean rises monotonically, which is what Proposition~\ref{prop:magnitude}
requires. Three points do not establish a growth rate; they establish only
that $\nind/n$ is not constant on random graphs over the range where it can be
certified.

Two negative findings are worth recording, because they bound what experiments
of this kind can show.

First, \emph{we could not separate sparse from dense families at accessible
sizes}. At $n=9$ the exact values are $1.56$ for a random tree, $2.11$ for a
Barab\'asi--Albert graph, $2.22$ for a Watts--Strogatz graph and $2.33$ for a
random geometric graph, against a mean of $1.92$ for $G(8,\tfrac12)$: the
sparse families sit \emph{above} the dense one. The explanation is that
sparsity is asymptotic. At $n=9$ these generators produce $14$ to $18$ edges,
essentially the density of $G(9,\tfrac12)$, so at this size there is no sparse
regime to observe. Only trees, whose $n-1$ edges make them sparse at every
size, separate clearly, and they are the lowest entry at every $n$ we
resolved.

Second, \emph{the exhaustive method has a hard ceiling near $n=12$}. Median
running time per graph rose from under a second at $n=6$ to $2.4$~s at $n=8$,
$271$~s at $n=10$, $373$~s at $n=12$ and beyond $1600$~s at $n=16$, at which
point the search returns neither an exact value nor a certified bound. The
extreme case is Zachary's karate club, $n=34$: after $10\,298$~s the search had
neither found an embedding of order at most $2n=68$ nor exhausted the
possibilities, so nothing at all is certified about it beyond the trivial
$34\le\nind\le1407$ from injectivity and Corollary~\ref{cor:singer}.
Determining the constant of Proposition~\ref{prop:magnitude} empirically is
therefore out of reach by this route. We are careful about what "this route"
means: Remark~\ref{rem:twins} shows that quotienting by a single family of
graph automorphisms moved $D_{6,6}$ at $n=14$ from unreachable to seconds, so
the ceiling reported here is a property of the symmetries our implementation
exploits --- translation only --- and not of exhaustive search as such. On the
random graphs of this section $\Aut(G)$ is typically trivial and no such
saving is available, which is why the ceiling bites here and not on the double
stars.
A third observation is positive, and concerns stability rather than growth.
Two Watts--Strogatz graphs on $n=9$ vertices with $m=18$ edges each --- the
same size, the same generator, the same rewiring probability --- gave
$\nind=9$ and $\nind=20$, both exact. The first had survived rewiring as a
circulant, so Proposition~\ref{prop:floor-n} applies and the host is the graph
itself; in the second, rewiring a couple of edges raised the minimum host order
by a factor of $2.2$. So $\nind$ is not stable under small perturbations of the
graph, which is the induced counterpart of the sensitivity that motivates the
completion invariant $\gamma$ of \cite{comp}, and a caution against reading
family averages as if $\nind$ were a smooth function of $n$ and $m$.

Finally, some graphs are resolved only from below. For a second
Barab\'asi--Albert graph on $9$ vertices --- not the one tabulated above, which
was resolved exactly --- the search exhausted every abelian group of every
order up to $18$ without success, certifying $\nind>2n$; this is a proof, not a
timeout, and such lower bounds are reported as strict inequalities throughout.

\section{Open problems}\label{sec:open}

We first record what is \emph{not} open. By Proposition~\ref{prop:magnitude},
$\nind(G)=O(n)$ is false, and no search-free construction can attain $O(n)$
for all graphs; both follow from \cite[Prop.~2.3]{babai-sos1985} together with
Corollary~\ref{cor:singer}. The worst-case order of magnitude, $\Theta(n^2)$,
has been settled since 1985. What remains open is the behaviour on restricted
classes, the constant, and the computational question.

\begin{problem}[Trees; cf.\ Conjecture~\ref{conj:trees}]\label{prob:trees}
Is $\nind(T)=O(n)$ for every tree $T$?
\end{problem}
The counting bound of Proposition~\ref{prop:magnitude} is vacuous here, since
there are only $n^{n-2}$ labelled trees. Babai and S\'os prove
$\nind(T)\le n^2$ and explicitly ask this question, noting that they have no
non-linear lower bound \cite[Thm.~7.1 and Rem.~7.2]{babai-sos1985}. Our double stars
supply the best lower bound on the constant we know: $c\ge15/7$ from measured
values, and $c\ge5/2$ under Conjecture~\ref{conj:doublestar}.
Proposition~\ref{prop:biparred} reduces the parity route to an additive
question about cross-differences, and caterpillars are the natural first case.
We regard this as the sharpest question in the area and the one our machinery
is best placed to attack.

\begin{problem}[Cartesian products]\label{prob:products}
Is $\nind(G_1\square G_2)$ ever much smaller than the bound of
Theorem~\ref{thm:product}, and does equality characterise any natural class?
\end{problem}
Corollary~\ref{cor:grid} gives $\nind(P_m\square P_m)=(1+o(1))n$, and the
smallest case already shows the bound is not exact:
$\nind(P_3\square P_3)=15$ against the product bound $16$.

\begin{problem}[Sparse classes]\label{prob:sparse}
Determine the growth of $\nind$ on graphs of bounded degree, bounded
degeneracy, or planar graphs.
\end{problem}
Here the counting argument does give something --- roughly
$\Omega(\Delta n\log n/\log(\Delta n))$ for maximum degree $\Delta$ --- so the
target is a matching upper bound rather than linearity.

\begin{problem}[The constant]\label{prob:constant}
For $G(n,\tfrac12)$, Proposition~\ref{prop:magnitude} confines the constant to
$[\tfrac12,1]$. Which endpoint is correct?
\end{problem}
Equivalently: is the Singer construction asymptotically optimal, or does the
freedom to let differences collide within $\DE$ and within $\DN$ buy a factor
of two?

\begin{problem}[Complexity]\label{prob:complexity}
Is computing $\nind$ NP-hard? Is $\nind(G)\le N$ hard for $N$ given in unary?
\end{problem}
We expect NP-hardness by analogy with the fixed-cyclic-host case of $\gamma$
\cite{comp}, but have
no reduction, and Section~\ref{sec:notractable} explains why none of our
machinery yields one: the relaxation cheapens each candidate without reducing
how many there are. The decision problem $\nind(G)=n$ asks whether $G$ is an
abelian Cayley graph, equivalently whether $\Aut(G)$ contains a regular
abelian subgroup, and is the natural first case. Our experiments suggest the
difficulty concentrates just below the optimum, where instances are barely
infeasible and exhaustion is forced --- the behaviour of a phase transition
rather than of uniform hardness.

There is a dichotomy here worth stating, because it runs in our favour. If the
host is required to be cyclic, deciding $\nind_{\mathrm{cyc}}(G)=n$ is the
problem of recognising a circulant, and that is solvable in polynomial time by
Evdokimov and Ponomarenko \cite{evdokimov-ponomarenko2003}; the companion
isomorphism problem for circulants is settled by Muzychuk's proof of
\'Ad\'am's conjecture in the square-free case and its extension
\cite{muzychuk1995,muzychuk1997}. So on cyclic hosts the decision problem at
the bottom of the range is easy while the minimisation is conjectured hard.
Over all abelian groups even the decision problem is open, and it is that gap
that Problem~\ref{prob:complexity} asks about.

\begin{problem}[The cyclic gap]\label{prob:cyclic}
Write $\nind_{\mathrm{cyc}}$ for the minimum over cyclic hosts only. Is
$\nind_{\mathrm{cyc}}/\nind$ bounded?
\end{problem}
On $17$ of our $22$ graphs the ratio is $1$; we measured $2.25$ for the
Petersen graph and $2.19$ for Frucht. The sharper question is therefore not
whether the ratio is bounded but which graphs have ratio greater than $1$ at
all. Every graph we found with a strictly larger cyclic optimum has an optimal
host whose invariant factors share a prime; whether that is necessary,
sufficient, or neither, we do not know.

\begin{problem}[Sharpening the floor]\label{prob:floor}
Find a lower bound on $\nind$ sensitive to more than one neighbourhood at a
time.
\end{problem}
The local independence term of Theorem~\ref{thm:floor} is tight on complete
bipartite graphs and loose on the Petersen graph, where it gives $6$ against
the true $16$. The double stars are worse still and are the natural target: by
Section~\ref{sec:doublestar} the gap there is $3q-2$, unbounded, so no
neighbourhood-local bound can be tight on that family. Unbalanced complete multipartite graphs
(Remark~\ref{rem:multipartite}) are a second case where our construction
overshoots the floor and neither bound is known to be tight.

\section*{Data availability and reproducibility}
The exact values reported in Section~\ref{sec:examples}, the underlying search
code, and the certificates are deposited at
\texttt{doi:10.5281/zenodo.21944455} \cite{zenodo}. Every value of $\nind$
stated in this paper is machine-certified in both directions: an explicit
labelling $f:V(G)\to\Gamma$ together with a connection set $S$ witnesses the
upper bound, and exhaustion over all abelian groups of smaller order, with all
symmetric connection sets up to the normalisation of Lemma~\ref{lem:gen},
witnesses the lower bound. The census is reproducible from the deposit with a
single command; graph labels follow Read and Wilson \cite{read-wilson1998}.

\section*{Declaration on the use of artificial intelligence}
The authors declare that the artificial-intelligence assistant Claude
(Anthropic) was used during the preparation of this manuscript, in two roles:
support with the implementation, debugging and reproducibility of the search
and certification software; and language and editing support in drafting. All
definitions, theorems, proofs and their verification, together with the
conception, scientific direction and conclusions of this work, are the
authors' own. The authors have reviewed the entire manuscript and take full
responsibility for its content.

\end{document}